\documentclass[12pt]{amsart}
\usepackage{amsmath,amsfonts,amscd,bezier,amsthm,amssymb,enumerate,enumitem}
\usepackage{mathrsfs,mathtools}
\usepackage{color}
\usepackage{cite, hyperref}
\usepackage{graphicx}
\usepackage{float}
\usepackage{enumitem,xfrac,nccmath}
\usepackage{tasks, multicol,yhmath}

\usepackage[pagewise]{lineno}

\newtheorem{theorem}{Theorem}[section]
\newtheorem{lemma}[theorem]{Lemma}

\theoremstyle{definition}

\newtheorem{remark}[theorem]{Remark}
\newtheorem{example}[theorem]{Example}
\newtheorem{proposition}[theorem]{Proposition}

\newtheorem{problem}{Problem}

\DeclareMathOperator{\dist}{dist}
\DeclareMathOperator{\supp}{supp}
\DeclareMathOperator{\lcm}{lcm}

\newcommand{\wh}{\widehat}

\newcommand{\wt}{\widetilde}

\newcommand{\calD}{\mathcal{D}}

\newcommand{\calN}{\mathcal{N}}

\newcommand{\R}{\mathbb{{R}}}
\newcommand{\Z}{\mathbb{{Z}}}
\newcommand{\N}{\mathbb{{N}}}
\newcommand{\T}{\mathbb{{T}}}
\newcommand{\C}{\mathbb{{C}}}
\newcommand{\D}{\mathbb{{D}}}

\DeclareMathOperator{\vspan}{span}
\newcommand{\Abs}[1]{\left\lvert#1\right\rvert}
\newcommand{\wb}{\overline}
\newcommand{\ls}{\lesssim}
\newcommand{\gs}{\gtrsim}

\numberwithin{equation}{section}

\allowdisplaybreaks

\makeatletter
\DeclareRobustCommand{\eqrefp}[2]{%
	\textup{\tagform@{\refp{#1}{#2}}}}
\makeatother
\DeclareRobustCommand{\refp}[2]{%
	\expandafter\ifx\csname r@#1\endcsname\relax
	\textbf{??}%
	\else
	\edef\areferencia{\ref{#1}-)}%
	\expandafter\eqrefpaux\areferencia-#2%
	\fi}
\def\eqrefpaux#1-#2){#1}

\begin{document}
	\title[Uniqueness with coefficient restrictions]{Uniqueness sets for functions of Dirichlet-type with restricted Taylor coefficients}

        \author[N. Miheisi]{Nazar Miheisi}
	\address[N. Miheisi]{Department of Mathematics, Faculty of Natural, Mathematical and Engineering Sciences, S4.02.1, Strand Building, Strand Campus, Strand, London WC2R 2LS.}
	\email{nazar.miheisi@kcl.ac.uk}

	\author[D. Seco]{Daniel Seco}
	\address[D. Seco]{Departamento de An\'alisis Matem\'atico e IMAULL,  Universidad de La Laguna, Avenida Astrof\'isico Francisco S\'anchez, s/n, 38206 San Crist\'obal de La Laguna, Santa Cruz de Tenerife,  Spain} \email{dsecofor@ull.edu.es}

\thanks{The second author was supported by Grant PID2024-160185NB-I00 funded by MICIU/AEI/10.13039/501100011033 and by FEDER, UE, through the Generaci\'on de Conocimiento programme of Spanish Ministry of Science, Innovation and Universities;  and by grant RYC2021-034744-I of the Ram\'on y Cajal programme from Agencia Estatal de Investigaci\'on (Spanish Ministry of Science, Innovation and Universities).}
	
\subjclass[2020]{Primary: 46E22; Secondary: 30H10, 30C15, 42C25.}
	
	\keywords{Reproducing kernel Hilbert spaces; spaces of analytic functions; uniqueness sets.}

	\date{\today}
	
	\begin{abstract} 
	Let $H$ be a reproducing kernel Hilbert space over the unit disk $\D$,
	where analytic monomials span a dense subset. Given $\mathcal{N}
	\subseteq\mathbb Z_+$ and $\Lambda\subseteq\mathbb \D$ we say that
	$(\Lambda,\calN)$ is a uniqueness pair for $H$ if $\Lambda$
	is a uniqueness set for the subspace of $H$ spanned by
	$\{z^n:\;n\in\calN\}$. We examine uniqueness pairs in the Dirichlet-type spaces $\calD_\alpha$, $0\leq\alpha\leq1$. We prove two complementary
	results. First, if $\mathcal N$ contains sufficiently long finite
	arithmetic progressions with fixed gap size, then no sequence $\Lambda$
	tending sufficiently rapidly to the boundary forms a uniqueness pair
	with $\calN$. Second, if $\calN$ satisfies a suitable arithmetic
	sparsity condition then one can construct uniqueness pairs
	$(\Lambda,\calN)$ with the points of $\Lambda$ tending to the boundary
	arbitrarily fast.
	\end{abstract}
	
	\maketitle
	
\section{Introduction}

\subsection{Overview}

Consider a reproducing kernel Hilbert space $H$ of analytic functions defined
on the unit disk $\D=\{z\in\C:\;|z|<1\}$. We assume that the polynomials are
dense in $H$. A subset $\Lambda$ of $\D$ is a \emph{uniqueness set} for $H$
if the only function in $H$ vanishing on $\Lambda$ is the zero function. 
The study of uniqueness sets has a  long history. For the usual Hardy space
$H^2$, the uniqueness sets are characterized by the Blaschke condition:
$\Lambda$ is a uniqueness  set for $H^2$ if and only if
$$
\sum_{\lambda\in\Lambda} (1-|\lambda|) = \infty.
$$
For most other spaces, however, a complete characterization seems out of reach.
Nevertheless, for a large class of spaces $H$ -- in particular, many Dirichlet-type spaces -- it is still true that sequences approaching the
boundary sufficiently fast cannot be uniqueness sets.

A natural question is how the structure of uniqueness sets changes if one
restricts to functions generated by a prescribed set of monomials. More
precisely, given a set $\calN$ of non-negative integers, one may ask which
subsets $\Lambda\subseteq\D$ are uniqueness sets for $\wb{\vspan}\{ z^n:
\, n\in\calN\}$, and how this depends on $\calN$. This leads to the following
notion: we say that $(\Lambda,\calN)$ is a \emph{uniqueness pair} if the
only function in $\wb{\vspan}\{ z^n:\, n\in\calN\}$ vanishing on $\Lambda$
is the zero function.

Fixing $\calN$, a number of natural questions arise: do there exist non-trivial
uniqueness pairs? Can one find uniqueness pairs $(\Lambda,\calN)$ for which the
points of $\Lambda$ tend to the boundary arbitrarily fast? Are the uniqueness
pairs $(\Lambda,\calN)$ stable under small perturbations of the points in
$\Lambda$? These depend in a delicate way on the arithmetic structure and
density of the set $\calN$. In this note, we initiate the study of
these questions in Dirichlet-type spaces.

We remark that the concept of uniqueness pairs introduced here is closely
related to the notions of \emph{mutually annihilating pairs}
\cite[Chap. 3]{Hav-Jor} and \emph{Heisenberg uniqueness pairs}
\cite{HUP1, HUP2, HUP3} in harmonic analysis, but is more naturally
adapted to the setting of reproducing kernel Hilbert spaces. There are some recent articles regarding uniqueness and restrictions on coefficients that deserve to be mentioned \cite{Boscardin, Zannier}, even if they do not focus on Hilbert spaces.

\subsection{The setting: Dirichlet-type spaces}

For $0\leq\alpha\leq1$, the Dirichlet-type space $\calD_\alpha$ consists of all
analytic functions $f$ on $\D$ such that
$$
\|f\|_\alpha^2 = \sum_{k\geq0} (k+1)^\alpha|\wh{f}(k)|^2 <\infty.
$$
Here, and throughout, $\wh{f}(k)$ is the $k^\text{th}$ Taylor coefficient
of $f$. The case $\alpha=0$ is the Hardy space $H^2$, and $\alpha=1$ gives
the classical Dirichlet space $\calD$.
More generally, one may define $\calD_\alpha$ for all $\alpha\in \R$. However, when $\alpha>1$, every function in $\calD_\alpha$ extends continuously to
$\overline{\D}$. Even though a characterization of uniqueness pairs would be
very interesting, the questions considered in this paper become essentially
trivial. For this reason, we restrict attention to the range $0\leq\alpha\leq1$.
The classical references \cite{Gar, EFKMR} contain the essentials on
$\calD_\alpha$ spaces.

Each $\calD_\alpha$ is a reproducing kernel Hilbert space, and the
reproducing kernel at $\lambda\in\D$ is
\begin{equation}\label{eqn47}
k^\alpha_\lambda(z) = \sum_{k\geq0}\frac{\overline{\lambda}^k z^k}{(k+1)^\alpha}.
\end{equation}
The zero sets for the spaces $\calD_\alpha$ have been extensively studied,
and enjoy several useful structural properties. In particular, they coincide
with those of the multiplier algebra, from which it follows that a finite
union of zero sets is again a zero set.

\subsection{Main results}

Let $\calN$ be a set of non-negative integers and $\Lambda$ be a subset of $\D$.
We recall that $(\Lambda, \calN)$ is a \emph{uniqueness pair} (for $\calD_\alpha$)
if whenever $f\in \calD_\alpha$ satisfies $f \perp k_\lambda^\alpha$ for all
$\lambda\in\Lambda$ and $f \perp z^k$ for all $k \in \Z_+\setminus\calN$, we
must have $f \equiv 0$.

For now, we ignore the case when $\calN$ is finite. The simplest case, otherwise, is when $\calN$ contains an infinite arithmetic progression.
In this case, $(\Lambda, \calN)$ is a uniqueness pair if and only if $\Lambda$
is a uniqueness set for $\calD_\alpha$ (see Proposition \ref{prop:AP}). A
celebrated result of Shapiro and Shields \cite{ShaShi} asserts that in this case
$\Lambda$ must satisfy
\begin{equation}\label{eqnA01}
\sum_{\lambda\in\Lambda} \frac{1}{\|k^\alpha_\lambda\|_\alpha^2} = \infty.\end{equation}
Consequently, if the points of $\Lambda$ tend to the boundary sufficiently
quickly, $(\Lambda, \calN)$ is not a uniqueness pair. Our first main result
shows that this remains true when $\calN$ contains a sequence of finite
arithmetic progressions, with a fixed gap size, and suitably growing lengths.

\begin{theorem}\label{thm:long-AP}
Let $0\leq\alpha\leq1$, and let $\{a_k\}_{k\geq1}$ and $\{l_k\}_{k\geq1}$
be sequences of positive integers such that
\begin{equation}\label{eq:progression}
l_k/a_k^\alpha\to\infty \quad\text{as}\quad k\to\infty.
\end{equation}
Let $\calN$ be a subset of $\Z_+$ which contains each arithmetic progression
$$
\{a_k,\, a_k+q, \,\dots,\, a_k+(l_k-1)q\}, \quad k\geq1,
$$
for some fixed $q\in\N$.
Then there exists an increasing sequence
$\{t_k\}_{k\geq1}\subseteq(0,1)$ such that the following holds: if
$\Lambda=\{\lambda_k\}_{k\geq1}\subseteq\D$ satisfies $|\lambda_k|\geq t_k$
for all sufficiently large $k$, then $(\Lambda,\calN)$ is not a uniqueness
pair for $\calD_\alpha$.
\end{theorem}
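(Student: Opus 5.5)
The plan is to build a nonzero $f\in\overline{\vspan}\{z^n:\ n\in\calN\}$ that vanishes on $\Lambda$. We start from a fixed monomial $f_0=z^{n_0}$ with $n_0\in\calN$, normalised so that $\|f_0\|_\alpha=1$, and correct it one point at a time. Each correction will be supported on a single progression block $V_k=\vspan\{z^{a_k+jq}:\ 0\le j<l_k\}\subseteq\overline{\vspan}\{z^n:\ n\in\calN\}$. Passing to a subsequence of the blocks, we may assume the blocks are pairwise disjoint and lie above $n_0$. Then $f_0$ and the corrections are mutually orthogonal, so $f_0$ cannot be cancelled and the limit is nonzero.

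The first step is the key quantitative input. Let $P_k$ denote the orthogonal projection onto $V_k$. We show that the hypothesis \eqref{eq:progression} forces
\[
M_k:=\sup_{\lambda\in\D}\|P_k k^\alpha_\lambda\|_\alpha^2=\sum_{j<l_k}(a_k+jq+1)^{-\alpha}\longrightarrow\infty .
\]
For $\alpha=1$ this sum is $\gtrsim\log\bigl((a_k+l_kq)/a_k\bigr)\to\infty$, and for $\alpha<1$ it is $\gtrsim l_k/(a_k+l_kq)^\alpha\to\infty$. In addition, $\|P_k k^\alpha_\lambda\|_\alpha^2\ge \tfrac12 M_k$ once $|\lambda|$ is close enough to $1$. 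A minimal-norm argument then shows that, for such $\lambda$, the element $h\in V_k$ with $h(\lambda)=1$ of least norm is $h=P_kk^\alpha_\lambda/\|P_kk^\alpha_\lambda\|^2$, and it has $\|h\|_\alpha^2\le 2/M_k$.

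The inductive step is as follows. Given $f_{k-1}$ vanishing at $\lambda_1,\dots,\lambda_{k-1}$, we set $f_k=f_{k-1}-f_{k-1}(\lambda_k)h_k$ with $h_k\in V_k$. We want $h_k(\lambda_k)=1$, $h_k(\lambda_j)=0$ for $j<k$, and $\|h_k\|$ small. Summability of $\sum_k|f_{k-1}(\lambda_k)|\,\|h_k\|$ then gives convergence. It also keeps $\|f-f_0\|<1$, so $f\neq0$; indeed $\langle f,f_0\rangle=1$ by orthogonality. We use the crude bound $|f_{k-1}(\lambda_k)|\le\|f_{k-1}\|\,\|k^\alpha_{\lambda_k}\|\lesssim(1-|\lambda_k|)^{-1/2}$. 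This is harmless because we may first pass to a subsequence of blocks, chosen after $\lambda_k$'s size class is fixed, along which $M_k$ grows as fast as needed. The threshold $t_k$ is then chosen so that the growth of $M_k$ beats $\|k^\alpha_{\lambda}\|$ for $|\lambda|\ge t_k$, in the sense that $M_{k}\ge 2^{2k}\|k^\alpha_{t_k}\|^2$ for the block attached to $\lambda_k$. Finitely many initial points are absorbed at the end, since a finite codimension constraint inside the infinite-dimensional space $\overline{\vspan}\{z^n:\ n\in\calN\}$ can always be met without killing the function.

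The main obstacle is the extra vanishing conditions $h_k(\lambda_j)=0$ for $j<k$. These restrict $h_k$ to a subspace $W_k\subseteq V_k$ of codimension at most $k-1$. The bound on $\|h_k\|$ must then come from $\|P_{W_k}k^\alpha_{\lambda_k}\|$, and this must hold uniformly in the earlier points $\lambda_j$, which are arbitrary apart from $|\lambda_j|\ge t_j$. My first attempt is to split $V_k$ into $k$ consecutive sub-progressions of comparable "mass" $\sum(a+jq+1)^{-\alpha}\ge M_k/(2k)$; this is possible by \eqref{eq:progression} after thinning the blocks. I would then show that some sub-progression keeps a kernel mass $\gtrsim M_k/k$ after removing $k-1$ directions. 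Alternatively, I would exploit the lacunary, almost-orthogonal structure so that the Gram matrix of the normalised vectors $P_k k^\alpha_{\lambda_k}/\|P_k k^\alpha_{\lambda_k}\|$ is a small perturbation of the identity. In that case the corrections can be solved for simultaneously by a Neumann series rather than enforced exactly step by step.
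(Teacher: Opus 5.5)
There is a genuine gap, and it lies in the two quantitative steps of your induction.

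\emph{The crude bound.} Bounding $|f_{k-1}(\lambda_k)|$ and $\|h_k\|_\alpha$ separately cannot give summability. Every $h\in V_k$ with $h(\lambda_k)=1$ satisfies $1=|\langle h,P_kk^\alpha_{\lambda_k}\rangle|\le\|h\|_\alpha\|P_kk^\alpha_{\lambda_k}\|_\alpha\le\|h\|_\alpha\|k^\alpha_{\lambda_k}\|_\alpha$. So your bound $\|f_{k-1}\|_\alpha\|k^\alpha_{\lambda_k}\|_\alpha\|h_k\|_\alpha$ is at least $\|f_{k-1}\|_\alpha\ge1$ for every $k$, and the series of bounds diverges. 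Your requirement $M_k\ge2^{2k}\|k^\alpha_{t_k}\|_\alpha^2$ also contradicts $\tfrac12M_k\le\|P_kk^\alpha_{t_k}\|_\alpha^2\le\|k^\alpha_{t_k}\|_\alpha^2$. Choosing the block after seeing $|\lambda_k|$ does not help either: capturing half of $M_k$ at $\lambda_k$ forces $M_k\le2\|k^\alpha_{\lambda_k}\|_\alpha^2$. This part alone is repairable, since $f_{k-1}$ is a polynomial and $\sup_{\D}|f_{k-1}|$ is controlled by its degree and norm, independently of $\lambda_k$.

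\emph{The vanishing constraints.} The step you call the main obstacle is left open, and as formulated it is false. The hypothesis imposes no separation on $\Lambda$, so an earlier $\lambda_j$ may lie arbitrarily close to $\lambda_k$. Then every $h\in V_k$ with $h(\lambda_k)=1$ and $h(\lambda_j)=0$ satisfies $1\le|\lambda_k-\lambda_j|\sup_{\D}|h'|\le C(V_k)|\lambda_k-\lambda_j|\,\|h\|_\alpha$. Hence no bound on $\|h_k\|_\alpha$ uniform in the earlier points exists. Pigeonholing over sub-progressions does not help, because each constraint $h(\lambda_j)=0$ acts on every sub-progression. The Neumann-series variant looks at the wrong matrix. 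The vectors $P_kk^\alpha_{\lambda_k}$ lie in disjoint blocks and are exactly orthogonal. The matrix you must invert is $(h_k(\lambda_j))_{j,k}$, whose off-diagonal entries are close to $1$ when two points cluster, so it is not a small perturbation of the identity.

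\emph{The missing idea.} The paper builds the correction from $f_{k-1}$ itself, rather than as ``value at $\lambda_k$'' times ``normalised interpolant''. It sets $f_k=f_{k-1}(1-p_k)$ with $p_k(\lambda_k)=1$.
\begin{itemize}
\item The zeros $\lambda_1,\dots,\lambda_{k-1}$ are preserved for free. In your notation $h_k=f_{k-1}p_k/f_{k-1}(\lambda_k)$ when $f_{k-1}(\lambda_k)\neq0$; this $h_k$ may have huge norm, but only the product $f_{k-1}p_k$ must be controlled.
\item To keep $f_{k-1}p_k$ supported in $\calN$, the paper first reduces to $q=1$ via $f(z)=z^mg(z^q)$ and writes the blocks as $[a_k,b_k)$. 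It then takes $p_k$ lacunary: $p_k(z)=z^{a_k}\sum_{j<r_k}c_jz^{jb_{k-1}}$ with $r_k=l_k/b_{k-1}$. The $c_j$ are essentially your minimal-norm recipe applied to the set $\{a_k+jb_{k-1}\}_{j<r_k}$.
\item Since $\deg f_{k-1}<b_{k-1}$, the shifts $S^{a_k+jb_{k-1}}f_{k-1}$ have disjoint supports inside $[a_k,b_k)$. This gives $\|f_{k-1}p_k\|_\alpha\ls\|f_{k-1}\|_\alpha\|p_k\|_\alpha$ and $\|f_k\|_\alpha^2\le(1+\|p_k\|_\alpha^2)\|f_{k-1}\|_\alpha^2$.
\item For $|\lambda_k|\ge t_k$, with $t_k$ depending only on the blocks, one has $\|p_k\|_\alpha^2\ls b_{k-1}\bigl(\sum_{a_k\le n<b_k}n^{-\alpha}\bigr)^{-1}$.
\item Thinning the blocks so that $\sum_{a_k\le n<b_k}(n+1)^{-\alpha}\ge k^2b_{k-1}$ makes $\prod_k(1+\|p_k\|_\alpha^2)$ finite.
\end{itemize}
The factor $b_{k-1}$ lost to lacunarity is exactly the price of handling arbitrary, unseparated earlier points.

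\emph{The initial points.} In this scheme finitely many early points are harmless: they contribute finitely many finite factors. Your ``absorb at the end'' step, by contrast, needs the subspace of functions vanishing on the tail to have dimension larger than the number of absorbed points. Infinite-dimensionality of the ambient space does not give that.
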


When $\alpha=0$, the assumptions in Theorem \ref{thm:long-AP} simply reduce
to the requirement that $\calN$ contains arbitrarily long arithmetic
progressions with fixed gap size. It is a consequence of the next theorem
that neither the assumption of fixed gap size, nor that of unbounded lengths,
can be removed. For $\alpha>0$, the situation is less clear. While the fixed
gap condition still cannot be dropped, we do not know whether the condition
$l_k/a_k^\alpha\to\infty$ is necessary. A detailed discussion on this is given
in Section \ref{subsec:neccesity}. Then, in Section \ref{subsec:shashi} we comment on the potential role of an analogue of \eqref{eqnA01} with restricted coefficients.

Our next result describes classes of sets $\calN$ that admit uniqueness pairs
$(\Lambda, \calN)$, where the points in $\Lambda$ can go to the boundary
arbitrarily quickly. Moreover, the uniqueness pairs constructed are stable
under sufficiently small perturbations of the points in $\Lambda$. Before
stating the result, we introduce some terminology.

Let $\calN=\{n_k\}_{k\geq1}$ be a subset of $\Z_+$, written in increasing
order. For $0<\alpha\leq1$, we say that $\calN$ is \emph{$\alpha$-sparse} if
\begin{equation}\label{eq:alpha-sparse}
\sum_{k\geq1} \frac{1}{n_k^\alpha} < \infty.
\end{equation}
We say that $\calN$ is \emph{$0$-sparse} if it is finite or satisfies
\begin{equation}\label{eq:zero-sparse}
\sum_{k\geq0} \frac{n_k^2}{n_{k+1}^2} < \infty.
\end{equation}
Note that for $\alpha>0$, \eqref{eq:alpha-sparse} is equivalent to the condition
$$
\sum_{k\geq0} \frac{n_k^{2-\alpha}}{n_{k+1}^2} < \infty.
$$
From this perspective, \eqref{eq:zero-sparse} is the natural analogue at the
endpoint $\alpha=0$. 

For $0\leq\alpha\leq1$, we say that $\calN$ is \emph{locally $\alpha$-sparse}
if for each $n\in\calN$, there exists $q\in\N$ such that the set
$\{m\in\calN:\; m\equiv n \!\!\mod q\}$, written in increasing order, is $\alpha$-sparse. The term ``locally'' refers to the Furstenberg
topology on $\Z$ -- that is, the topology generated by the collection of
infinite arithmetic progressions. To be precise, $\calN$ is locally
$\alpha$-sparse if every element of $\calN$ has a Furstenberg neighbourhood
whose intersection with $\calN$ is $\alpha$-sparse.

We discuss several further examples in Section \ref{subsec:sparse-examples},
but for now we mention that the sequence of primes and the sequence
$\{2^k\}_{k\geq0}$ are locally $0$-sparse, and the sequence $\{k^d\}_{k\geq1}$
is locally $\alpha$-sparse precisely if $\alpha>1/d$.

\begin{theorem}\label{thm:Furstenberg}
Let $0\leq\alpha\leq1$ and let $\calN$ be a subset of $\Z_+$ which is
locally $\alpha$-sparse. Then for each increasing sequence $\{t_k\}_{k\geq1}
\subseteq(0,1)$, there exists $\Lambda=(\lambda_k)_{k\geq1}\subseteq\D$
such that $|\lambda_k|\geq t_k$ for every $k\geq1$ and $(\Lambda, \calN)$ is
a uniqueness pair for $\calD_\alpha$. Moreover, if $\wt{\Lambda}\subseteq\D$
satisfies
\begin{equation}\label{eq:perturbation}
\inf_{\mu\in\wt{\Lambda}}\rho(\mu, \lambda_k)
=
o\left(\sqrt{1-|\lambda_k|}\right),
\quad k\to\infty,
\end{equation}
where $\rho$ is the pseudohyperbolic metric, then $(\wt{\Lambda}, \calN)$ is
also a uniqueness pair.
\end{theorem}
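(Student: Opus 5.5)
The plan is to exploit the fact that, for an $\alpha$-sparse set of exponents, a function in the corresponding subspace is essentially its boundary Fourier series, and this series converges absolutely. Vanishing on suitably placed points very close to $\mathbb T$ then forces the boundary function to vanish on a dense subset of $\mathbb T$, and hence to be zero.

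\textbf{Step 1 (reduction to residue classes).} For each $n\in\calN$ fix $q_n$ as in the definition of local $\alpha$-sparsity. Then $\calN$ is a union of the sets $\calN\cap(n+q_n\Z)$, each of which is $\alpha$-sparse. Since $\calN$ is countable, enumerate these classes as $\calN_1,\calN_2,\dots$ with moduli $Q_1,Q_2,\dots$. Put $L_m=\lcm(Q_1,\dots,Q_m)$ and let $\omega$ denote an $L_m$-th root of unity. If $f$ vanishes on a full orbit $\{\omega\lambda\}$, then every component
\[
f_{j}(z)=\frac1{L_m}\sum_{\omega^{L_m}=1}\omega^{-j}f(\omega z)
\]
vanishes at $\lambda$. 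In particular the projection of $f$ onto $\wb{\vspan}\{z^n:n\in\calN_i\}$, $i\le m$, is a sum of such components.

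This projection is not exactly a single component, since several classes $\calN_i$ may share residues mod $L_m$. I handle this by working with the projection onto $\{n\equiv j \bmod L_m\}\cap\calN$. That set is contained in a union of at most $m$ of the $\alpha$-sparse classes plus the remaining part of $\calN$, and I will arrange for the latter to contribute only a small tail (see Step 2). So $\Lambda$ will consist of orbits $\{\omega\lambda\}$ under $L_m$-th roots of unity, with $m\to\infty$ slowly. Each orbit is finite, so the constraint $|\lambda_k|\ge t_k$ is met simply by taking all radii large enough.

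\textbf{Step 2 (key estimate, $\alpha>0$).} Let $g=\sum_{j}c_j z^{n_j}\in\calD_\alpha$ with $\{n_j\}$ $\alpha$-sparse. By Cauchy--Schwarz,
\[
\sum_j|c_j|\le\|g\|_\alpha\Big(\sum_j n_j^{-\alpha}\Big)^{1/2}<\infty .
\]
Hence $G(\zeta)=\sum_j c_j\zeta^{n_j}$ is continuous on $\mathbb T$. Moreover
\[
|g(r\zeta)-G(\zeta)|\le\|g\|_\alpha\Big(\sum_{j\le J}n_j^{-\alpha}\big(n_j(1-r)\big)^2\Big)^{1/2}+2\|g\|_\alpha\Big(\sum_{j>J}n_j^{-\alpha}\Big)^{1/2}.
\]
Choosing $J$ first and then $r$ close to $1$, the error is at most $\varepsilon_k\|g\|_\alpha$ uniformly in $\zeta$. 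This works for any prescribed $\varepsilon_k\to0$, and the required closeness of $r$ to $1$ depends only on the exponent set, not on $g$. The same estimate handles the finitely many classes at once, and handles the leftover exponents as a tail.

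Now enumerate a dense sequence $(\zeta_k)\subseteq\mathbb T$ in which each point is repeated infinitely often. Take $\lambda_k=r_k\zeta_k$ (together with its root-of-unity orbit) with $r_k\ge t_k$ and $\varepsilon_k\to0$. If $f$ vanishes on $\Lambda$, then each relevant projection satisfies $|G(\zeta_k)|\le\varepsilon_k\|f\|_\alpha$ along infinitely many $k$ with $\zeta_k=\zeta$. Hence $G(\zeta)=0$ on a dense set, so $G\equiv0$ by continuity, all $c_j=0$, and $f=0$.

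\textbf{Step 3 (case $\alpha=0$).} Absolute convergence fails here, and I expect this to be the main obstacle. Condition \eqref{eq:zero-sparse} gives $n_{k+1}/n_k\to\infty$, which I would use to choose $r$ at the scale $1-r\approx(n_kn_{k+1})^{-1/2}$. Then $g(r\zeta)=\sum_{j\le k}c_j\zeta^{n_j}+o(\|g\|)$, with both errors controlled by $\sum n_k^2/n_{k+1}^2<\infty$ (and square-summability for the tail). Vanishing then says that the partial sums $P_k(\zeta_k)$ are small. To conclude $c_j=0$ I would argue inductively: use many angles at one scale, with $k$ fixed while the points range over an $n_k$-net of $\mathbb T$, so that the trigonometric polynomial $P_k$, which has degree $n_k$, is forced to be small in sup norm and hence in $\ell^2$. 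Letting $k\to\infty$ then kills all coefficients. This requires finitely many points per scale, which is allowed.

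\textbf{Step 4 (stability under perturbation).} If $\rho(\mu,\lambda_k)=\delta_k\sqrt{1-|\lambda_k|}$ with $\delta_k\to0$, then $|\mu-\lambda_k|\lesssim\delta_k(1-|\lambda_k|)^{3/2}$. The Bernstein-type bound $|g'(z)|\lesssim\|g\|(1-|z|)^{-3/2}$, valid in $H^2\supseteq\calD_\alpha$, gives $|g(\mu)-g(\lambda_k)|\lesssim\delta_k\|g\|$, which is $o(\|g\|)$. Thus perturbed points satisfy the same approximate identities as in Steps 2 and 3, and the argument goes through unchanged; the orbits must be perturbed as well, which the hypothesis allows pointwise.
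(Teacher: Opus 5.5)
\paragraph{Where the proposal breaks.} Your Step 2 ($\alpha>0$) is essentially the paper's argument. It uses orbits under roots of unity whose order is divisible by the moduli of the sparse residue classes; the paper instead makes the classes disjoint with $Q_k\mid Q_{k+1}$ (Lemma \ref{lem:decomposition}). It then uses the averaging identity of Lemma \ref{lem:modular} and the continuity on $\wb{\D}$ of functions with $\alpha$-sparse support. Your Step 4 is a valid substitute for the identity \eqref{eq:metric}.

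The genuine gap is in Step 3. For $\alpha=0$, your approximation $g(r\zeta)=\sum_{j\le k}c_j\zeta^{n_j}+o(\|g\|)$ requires both $n_k(1-r)\to0$ \emph{and} $n_{k+1}(1-r)\to\infty$. So the radius is confined to a window fixed by $\calN$, and that circle must carry at least $k$ points (about $n_k$ for your net argument). This is incompatible with $|\lambda_N|\ge t_N$ for an \emph{arbitrary} increasing $\{t_N\}$. Already for a single $0$-sparse $\calN=\{n_k\}$: the largest index $N$ among the points used at scale $k$ satisfies $N\ge k$, so $r\ge t_N\ge t_k$. If, say, $1-t_k\le n_{k+1}^{-2}$ for all $k$, then $n_{k+1}(1-r)\le 1/n_{k+1}$, and no scale is usable.

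Your remark in Step 1 that one ``simply takes the radii large enough'' is valid only when the sampling estimate holds for \emph{every} $r$ close to $1$, as in Step 2. What Step 3 actually gives is uniqueness pairs whose rate of approach to $\T$ is tied to the gaps of $\calN$. Tellingly, it only uses $n_{k+1}/n_k\to\infty$, not the full condition \eqref{eq:zero-sparse}.

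\paragraph{The missing idea.} You need to choose the angles so that the estimate is uniform in the radius. Lemma \ref{lem:lacunary} produces, within $1/n_{k+1}$ of each $n_k$-th root of unity, a point $\zeta=e^{2\pi i\theta}$ with $\dist(n_l\theta,\Z)\le n_l/n_{l+1}$ for all $l\ge k$. At such points the tail $\sum_{l\ge k}c_lr^{n_l}\zeta^{n_l}$ differs from its value at $\zeta=1$ by at most a constant times $\|g\|(\sum_{l\ge k}n_l^2/n_{l+1}^2)^{1/2}$, for \emph{every} $r$; this is where the full $0$-sparsity enters. The argument then runs as follows. Averaging over the $n_k$ points isolates the tail, and subtracting it leaves the head. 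A Kadets $1/4$-type Marcinkiewicz--Zygmund inequality then bounds $\sum_{j<k}|c_j|^2r^{2n_j}$ by the sample average. The resulting sets $\Theta_k(\calN)$ depend only on $\calN$, so the radii can be chosen afterwards as close to $1$ as required.

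\paragraph{Two smaller points.} First, drop the plan to make the ``remaining part of $\calN$'' contribute a small tail. Those exponents are not sparse, so this is false, and it is also unnecessary: $P_{\calN_i}f(z)=\frac1{Q_i}\sum_{\omega^{Q_i}=1}\omega^{-c_i}f(\omega z)$, with $c_i$ the residue of $\calN_i$, is directly an average over a sub-orbit. Second, in the perturbed case use this average, rather than bounding $P_{\calN_i}f$ termwise as a sum of $L_m/Q_i$ approximately vanishing components, which would lose a factor $L_m/Q_i$.
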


\subsection{The case when $\calN$ and $\Lambda$ are finite}

The problem of characterizing the uniqueness pairs $(\Lambda, \calN)$ is
non-trivial even when $\calN$ and $\Lambda$ are finite. Suppose that
$\calN=\{n_1, \dots, n_l\}$ and $\Lambda=\{\lambda_1, \dots, \lambda_m\}$.
It is a classical fact that if $m>n_l$, then $(\Lambda, \calN)$ is a uniqueness
pair; this is a simple consequence of the Lagrange interpolation theorem.
In addition, if $m<l$, then $(\Lambda, \calN)$ is not a uniqueness pair.
Indeed, the dimension of $\vspan\{z^{n_k}\}_{k=1}^l$ is $l$, so if $m<l$, the
linear map $f\mapsto\{f(\lambda_j)\}_{j=1}^m$ from $\vspan\{z^{n_k}\}_{k=1}^l$
into $\C^m$ must have a non-trivial kernel. The following example illustrates that a simple description using only the sizes of
$\calN$ and $\Lambda$ is not available.

\begin{example}
Let $\calN=\{1,2,4\}$. Then for $\Lambda=\{1/2, -1/2, i/2\}$, we have that
$(\Lambda, \calN)$ is a uniqueness pair. Indeed for any function
$f(z)=az+bz^2+cz^4$, we have that $f(1/2)-f(-1/2)=a$ and $f(1/2)-f(i/2) =
a(1-i)/2 + b/2$, and so if $f$ vanishes on $\Lambda$, we must have $a=b=c=0$.
However, for $\Lambda=\{1/2, -1/2, 0\}$, $(\Lambda, \calN)$ is not a uniqueness
pair, as the function $f(z) = z^2-4z^4 = z^2(1-2z)(1+2z)$ readily demonstrates.
\end{example}

A general type of interpolation required for finite problems of this kind is known as Birkhoff interpolation, and it has a long history\cite{Birkhoff, Polya, Schoenberg}. Whenever $\Lambda$ and $\calN$ have the same cardinal and the nodes are real, the techniques in \cite{AtkinsonSharma} provide a somewhat satisfactory partial answer, not in the form of a closed formula but rather an algorithm. We suspect a complex analogue holds
 but we decided not to pursue this finite problems any further here.

\subsection{Notation and conventions}

Before continuing, we fix some basic notation that will be used in the
remainder of the paper.

\begin{enumerate}[label=\arabic*., itemsep=3pt]
\item
Throughout, $\Z_+=\{0,1,2,\dots\}$ and $\N=\{1,2,\dots\}$.
\item
If $A$ and $B$ are non-negative quantities, we  $A\lesssim B$ if
there exists a constant $C>0$ such that $A\leq CB$. We write $A\simeq B$ if
both $A\lesssim B$ and $B\lesssim A$ hold. The notation $A=O(B)$ and $A=o(B)$
has its usual meaning.
\item
For $f\in H^2$, $\wh{f}(k)$ denotes the $k^\text{\rm th}$ Taylor coefficient of $f$, and
$$
\supp\wh{f}=\{n\in\Z_+:\; \wh{f}(n)\neq0\}
$$
denotes its Taylor support.
\item
We denote by $S$ the shift operator on $H^2$,
$$
Sf(z)=zf(z),
$$
and by $S^*$ its adjoint, the backward shift.
\item
For $a,l,q\in\N$, we write $P(a,l,q)$ for the arithmetic progression of
length $l$, first term $a$, and gap size $q$, i.e.
$$
P(a,l,q)=\{a,\,a+q,\dots,a+(l-1)q\}.
$$

\item
The pseudohyperbolic metric on $\D$ is denoted by $\rho$ and is given by
$$
\rho(z,w)=\left|\frac{z-w}{1-\overline{w}z}\right|.
$$
\end{enumerate}


\section{Long arithmetic progressions}

\subsection{The case of infinite arithmetic progressions}

Before proving Theorem \ref{thm:long-AP}, we first address the simpler case
when $\calN$ contains an infinite arithmetic progression.

\begin{proposition}\label{prop:AP}
Fix $0\leq\alpha\leq1$, and let $\calN$ be a subset of $\Z_+$ which contains
an infinite arithmetic progression. Then $(\Lambda, \calN)$ is a uniqueness
pair for $\calD_\alpha$ if and only if $\Lambda$ is a uniqueness set for $\calD_\alpha$.
\end{proposition}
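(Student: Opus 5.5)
Suppose first that $\Lambda$ is not a uniqueness set for $\calD_\alpha$. If $\calN\supseteq\{a+jq:\ j\geq0\}$, we build a nonzero $f\in\calD_\alpha$ with $\supp\wh f\subseteq\{a+jq\}$ that vanishes on $\Lambda$. Conversely, if $\Lambda$ is a uniqueness set for $\calD_\alpha$, then it is trivially a uniqueness set for the closed subspace spanned by $\{z^n:n\in\calN\}$. So $(\Lambda,\calN)$ is a uniqueness pair. Only the first direction needs work.

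To construct $f$, let $\omega=e^{2\pi i/q}$ and set
$$\Lambda'=\bigcup_{j=0}^{q-1}\omega^j\Lambda .$$
Rotations $z\mapsto\omega^j z$ preserve $\calD_\alpha$ isometrically, so each $\omega^j\Lambda$ is a zero set for $\calD_\alpha$. By the fact recalled in the introduction, a finite union of zero sets is again a zero set, so $\Lambda'$ is a zero set. After removing finitely many points if necessary, we may assume $0\notin\Lambda'$. Indeed, subsets of zero sets are zero sets, and if $0\in\Lambda$ we can still keep the zero there, since we multiply by $z^a$ later.

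Now take $g\in\calD_\alpha$ with $g\not\equiv0$ and $g|_{\Lambda'}=0$, and define
$$h(z)=\prod_{j=0}^{q-1}g(\omega^j z).$$
This $h$ is invariant under $z\mapsto\omega z$, so $\supp\wh h\subseteq q\Z_+$. It vanishes on $\Lambda'\supseteq\Lambda$ and is nonzero. The catch is that $\calD_\alpha$ is not an algebra for $\alpha\leq1$, so $h$ need not lie in $\calD_\alpha$. To avoid this, we use instead that zero sets of $\calD_\alpha$ coincide with zero sets of the multiplier algebra $M(\calD_\alpha)$, and take each factor $g(\omega^j\cdot)$ to be a nonzero multiplier vanishing on $\Lambda'$. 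Then $h$ is a product of multipliers, hence a multiplier, hence in $\calD_\alpha$ since $1\in\calD_\alpha$.

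Finally, set $f(z)=z^a h(z)$. Then $f\in\calD_\alpha$, because $z^a$ is a multiplier, and $f\neq0$. Its Taylor support lies in $a+q\Z_+\subseteq\calN$, and $f$ vanishes on $\Lambda$. Hence $(\Lambda,\calN)$ is not a uniqueness pair.

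The main obstacle is making sure $h$ lies in $\calD_\alpha$. The cleanest route is through multipliers, which relies on the stated coincidence of zero sets with those of $M(\calD_\alpha)$. If that were unavailable, one could instead average $g$ over the rotations with weights $\omega^{-jr}$ to extract the coefficients in a single residue class $r\bmod q$. Any one of these projections vanishes on $\Lambda$ only after combining with the union trick, so the product/multiplier route is preferable.
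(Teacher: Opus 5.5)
Your proof is correct, but your construction is multiplicative where the paper's is linear.

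The paper forms the rotation-invariant set $\wt\Lambda=\bigcup_{k<q}e^{2\pi ik/q}\Lambda$, which is a zero set by the finite-union property. It takes any nonzero $f\in\calD_\alpha$ vanishing on $\wt\Lambda$ and the average $\wt f(z)=\frac1q\sum_{k<q}f(e^{2\pi ik/q}z)$. Linearity gives $\wt f\in\calD_\alpha$ for free, invariance of $\wt\Lambda$ makes $\wt f$ vanish on $\Lambda$, and $S^a\wt f$ finishes.

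You instead take the product $h=\prod_{j<q}g(\omega^j\cdot)$ of rotations of a nonzero multiplier.

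\emph{What your route buys.} Non-vanishing of $h$ is automatic, being a product of nonzero analytic functions. The plain average, by contrast, vanishes identically whenever $\supp\wh f$ misses $q\Z_+$ (e.g.\ $q=2$ and $f$ odd). So the averaging route needs an extra choice that the paper leaves implicit. One option is to pick $r$ with $F_r\not\equiv0$ as in Lemma \ref{lem:modular} and use $S^aS^{*r}F_r$, which is essentially your fallback. Another is to first divide out the zero of $f$ at the origin so that $\wh f(0)\neq0$. In either case, replace $a$ by $a+q$ if $0\in\Lambda$.

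\emph{What your route costs.} $g$ must be a multiplier, so you invoke the coincidence of zero sets of $\calD_\alpha$ and $M(\calD_\alpha)$ directly. The paper uses only its consequence that finite unions of zero sets are zero sets.

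In your route the union $\Lambda'$ is superfluous. The factor $j=0$ already forces $h$ to vanish on $\Lambda$, so a nonzero multiplier vanishing on $\Lambda$ itself suffices.

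One small slip: discarding $0$ from $\Lambda'$ is unnecessary, and your justification fails when $a=0$, since then $z^a=1$ creates no zero at the origin. Simply keep $0$, since $h$ vanishes there whenever $g$ does, or replace $a$ by $a+q$.
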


There may be some intersection between this proposition and a similar result for superharmonically weighted Dirichlet type spaces in \cite{Baoetal}, but a potential overlap is far from trivial.

\begin{proof}
Suppose $\calN$ contains the arithmetic progression $\{a+kq:\;k\in\Z_+\}$,
where $a,q\in\Z_+$ and $q\geq1$. Take $\Lambda$ to be a zero set for
$\calD_\alpha$. Then the set
$$
\wt{\Lambda} = \bigcup_{k<q} e^{i2\pi k/q}\Lambda
$$
is also a zero set for $\calD_\alpha$. Take a $f\in\calD_\alpha$,
$f\not\equiv0$, which vanishes at $\wt{\Lambda}$. Thus the function
$$
\wt{f}(z) = \frac{1}{q} \sum_{k<q} f(e^{i2\pi k/q} z)
$$
also vanishes at $\wt{\Lambda}$, and since it is invariant under rotation
by $2\pi/q$, its Taylor coefficients are supported on multiples of $q$.
Hence $S^a\wt{f} \in \calD_\alpha$ is a non-zero function whose Taylor coefficients are
supported on $\calN$ and which vanishes on $\Lambda$.
\end{proof}

\subsection{Proof of Theorem \ref{thm:long-AP}}
The proof proceeds by recursively constructing functions supported on $\calN$
that vanish on larger and larger finite subsets of $\Lambda$. The growth
condition on the arithmetic progressions, combined with the fact that the
points of $\Lambda$ approach the boundary sufficiently quickly, provides
sufficient norm control to obtain a non-trivial limit in $\calD_\alpha$.

\begin{proof}[Proof of Theorem \ref{thm:long-AP}]
It suffices to prove the claim for $q=1$. To see this, first note that we only
need to consider the case when all of the $P(a_k, l_k, q)$ are contained in
the same residue class mod $q$. With this in mind, we assume that each
$n\in P(a_k, l_k, q)$ satisfies $n\equiv m \!\mod q$ for some fixed $0\leq m<q$,
and set
$$
\wt a_k = (a_k-m)/q,
\quad
\calN_0 = \bigcup_{k\geq1} P(\wt a_k,l_k,1).
$$
Then $l_k/\wt{a}_k^\alpha \geq l_k/a_k^\alpha\to\infty$ as $k\to\infty$,
and for $f\in\calD_\alpha$ with $\supp\wh{f}\subseteq\calN$, there is some
$g\in\calD_\alpha$ with $\supp\wh{g}\subseteq\calN_0$ and such that
$f(z)=z^mg(z^q)$. Consequently, if $\{t_k\}_{k\ge1}\subset(0,1)$ has the
property that every sequence $\Lambda=\{\lambda_k\}_{k\ge1}\subset\D$
satisfying $|\lambda_k|\geq t_k$ for all sufficiently large $k$ fails to
be a uniqueness set for $\calN_0$, then the same conclusion holds for $\calN$
with $\{t_k^{1/q}\}_{k\ge1}$ in place of $\{t_k\}_{k\ge1}$.

From now on we suppose that $\calN$ is a union of intervals $[a_ k, b_k)$,
$k\geq0$, so that $l_k=b_k-a_k$. Observe that the condition \eqref{eq:progression}
implies that
$$
\sum_{a_k\leq n < b_k} \frac{1}{(n+1)^{\alpha}}
\geq
\frac{l_k}{(a_k+l_k)^{\alpha}}
\to \infty \quad\text{as}\quad k\to\infty.
$$
Indeed, if $l_k=O(a_k)$, then $l_k/(a_k+l_k)^{\alpha} \gs l_k/a_k^{\alpha}$,
otherwise $l_k/(a_k+l_k)^{\alpha} \sim l_k^{1-\alpha}$. Therefore, after
removing intervals if necessary, we can suppose that for each $k$,
\begin{equation}\label{eq:AP-interval-condition}
\sum_{a_k\leq n < b_k} \frac{1}{(n+1)^{\alpha}}
\geq k^2 b_{k-1}.
\end{equation}
Fix a sequence $\{\lambda_k\}_{k\geq1}$ in $\D$. We will construct an explicit
sequence of functions $f_k$, $k\geq0$, with the property that
$\supp\wh{f_k}\subseteq\calN$ and $f_k$ vanishes on $\lambda_1,\dots,\lambda_k$,
and show that if $|\lambda_k|\to1$ sufficiently fast, $f_k$ converges to
a non-zero function in $\calD_\alpha$.

Set $f_0(z) = z^{a_0}$, and for $k\geq1$, set $r_k=l_k/b_{k-1}$ and
$f_k(z)=f_{k-1}(1-p_k)$, where
$$
p_k(z) = \frac{1}{\lambda_k^{a_k}}
\left(\sum_{j<r_k}
\frac{|\lambda_k|^{2jb_{k-1}}}{(a_k+jb_{k-1})^\alpha}\right)^{-1}
z^{a_k}\sum_{j<r_k}
\frac{(\wb{\lambda_k}z)^{jb_{k-1}}}{(a_k+jb_{k-1})^\alpha}.
$$
Observe that for each $k$, $p_k(\lambda_k)=1$, so
$$
f_k=f_0\prod_{j=1}^{k}(1-p_j)
$$
vanishes on $\lambda_1,\dots,\lambda_k$. In addition, if $f_{k-1}$ is a
polynomial of degree at most $b_{k-1}-1$, the Taylor coefficients of
$f_{k-1}p_k$ are supported on $[a_{k}, b_{k})$, from which it follows by
induction that $\supp\wh{f_k} \subseteq\calN$.

Since $f_k$ and $f_{k-1}$ share the same coefficients up to the degree of $f_{k-1}$, the functions $f_k$ converge in $\calD_\alpha$ precisely if $\|f_k\|_\alpha$
is bounded uniformly in $k$. Before estimating $\|f_k\|_\alpha$, we note that
the Taylor coefficients of the functions $S^{a_k}S^{jb_{k-1}}f_{k-1}$,
$j=0,1,\dots b_{k-1}-1$, are supported on disjoint sets. Moreover,
$$
\|S^{a_k}S^{jb_{k-1}}f_{k-1}\|_{\alpha}^2
=
\sum_{n\leq b_{k-1}} |\wh{f}(n)|^2 (n+1+ a_k+jb_{k-1})^\alpha
\leq
(a_k+jb_{k-1})^\alpha \|f_{k-1}\|_{\alpha}^2.
$$
It follows that $\|f_{k-1}p_k\|_\alpha \leq \|f_{k-1}\|_\alpha\|p_k\|_\alpha$, and hence
\begin{equation}\label{eq:norm-f_k}
\|f_k\|_\alpha^2
\leq (1+\|p_k\|_\alpha^2)\|f_{k-1}\|_\alpha^2
\leq \|f_0\|_\alpha^2\prod_{1\leq j\leq k} (1+\|p_j\|_\alpha^2).
\end{equation}
It is straightforward to check that for each $k$,
\begin{equation}\label{eq:norm-p_k}
\|p_k\|_\alpha^2 = \frac{1}{|\lambda_k|^{2a_k}}
\left(\sum_{j<r_k}
\frac{|\lambda_k|^{2jb_{k-1}}}{(a_k+jb_{k-1})^\alpha}\right)^{-1}.
\end{equation}
Note that $\|p_k\|_\alpha^2$ is monotonically decreasing as $|\lambda_k|\to1$.
Choose $t_k<1$ such that
$$
t_k^{2a_k}\sum_{j<r_k}
\frac{t_k^{2jb_{k-1}}}{(a_k+jb_{k-1})^\alpha}
\geq \frac{1}{2}\sum_{j<r_k}
\frac{1}{(a_k+jb_{k-1})^\alpha}.
$$
Then taking $|\lambda_k|\geq t_k$ we get
\begin{align}\label{eq:norm-p_k2}
\|p_k\|_\alpha^2
&\ls
\left(\sum_{j<r_k}\frac{1}{(a_k+jb_{k-1})^\alpha}\right)^{-1} \\ \nonumber
&\leq
\left(\frac{1}{b_{k-1}}\sum_{n<b_{k-1}}\sum_{j<r_k}
\frac{1}{(n+ a_k+jb_{k-1})^\alpha}\right)^{-1}
\ls
b_{k-1}\left(\sum_{a_k\leq j<b_k}\frac{1}{j^\alpha}\right)^{-1}.
\end{align}

Therefore, if $|\lambda_k|\geq t_k$ for all sufficiently large $k$, it
follows from \eqref{eq:AP-interval-condition}, \eqref{eq:norm-f_k} and
\eqref{eq:norm-p_k2} that
$$
\lim_{k\to\infty}\|f_k\|_\alpha^2
\ls
\prod_{k\geq1} (1+1/k^2)
\ls
\exp\left(\sum_{k\geq1} 1/k^2\right) <\infty.
$$
\end{proof}

\subsection{Necessity of conditions in Theorem \ref{thm:long-AP}}
\label{subsec:neccesity}

We now examine the extent to which the assumptions in Theorem
\ref{thm:long-AP} are necessary. Our approach will be to construct unions
of arithmetic progressions which are locally $\alpha$-sparse, and then apply Theorem \ref{thm:Furstenberg} to conclude that this set does not satisfy the
conclusions of Theorem \ref{thm:long-AP} for $\calD_\alpha$.

We first show that the assumption of fixed gap size cannot be removed. Let
$p_k$ denote the $k^\text{th}$ prime and set $q_k=\prod_{j\leq k}p_j$.
Choose integers $a_k$ and $l_k$ recursively so that $a_{k+1}>a_k+l_k$, $l_k/a_k\to\infty$, and $a_k$ is divisible by $p_k$, but not by any of the
primes $p_1,\dots,p_{k-1}$. Observe that this can be done since at each stag
there are arbitrarily large integers satisfying the required divisibility
conditions. Set
$$
\calN=\bigcup_{k\geq1}P(a_k,l_k,q_k).
$$
Then $\calN$ contains arbitrarily long arithmetic progressions, and
the condition $l_k/a_k^\alpha\to\infty$ holds for every $0\leq\alpha\leq1$.
However, $\calN$ is locally $0$-sparse: if $n\in P(a_k,l_k,q_k)$, then
$$
\{m\in\calN:\; m\equiv n \!\!\mod p_k\}
$$
is finite. Thus the conclusion of Theorem \ref{thm:long-AP} fails for this
$\calN$, for every $0\leq\alpha\leq1$.

We now turn to the case $\alpha=0$, and show that the condition
$l_k\to\infty$ is also necessary. Fix $l\in\N$. We construct intervals $[a_k,a_k+l)$,
$k\geq1$, such that
$$
\calN =\bigcup_{k\geq1}[a_k,a_k+l)
$$
is locally $0$-sparse. The construction is recursive. At the $k$-th stage,
choose new primes $q_{k,0},q_{k,1},\dots,q_{k,l-1}$, all larger than $l$ and
larger than all previously chosen elements of $\calN$. We then choose $a_k$,
using the Chinese remainder theorem, so that for $0\leq j <l$ and every
previously chosen prime $q_{m,i}$, $a_k$ satisfies
\begin{align*}
a_k+j &\equiv 0\mod{q_{k,j}}, \\
a_k+j &\not\equiv 0\mod{q_{m,i}}.
\end{align*}
As before, we may choose $a_k$ arbitrarily large; in particular, we may
ensure that $a_k>a_{k-1}+l$. With this choice, each set
$\{n\in\calN:\; n\equiv a_k+j \mod q_{k,j}\}$ is finite. Hence $\calN$ is
locally $0$-sparse. Therefore the hypothesis $l_k\to\infty$ in Theorem
\ref{thm:long-AP} is necessary when $\alpha=0$.

For $\alpha>0$, the situation is less clear. Let
$$
\calN=\bigcup_{k\geq1}[a_k,a_k+l_k),
$$
and suppose that $l_k=O(a_k^\beta)$ for some $\beta<\alpha$. Then
$$
\sum_{n\in\calN} n^{-\alpha}
\lesssim
\sum_{k\geq1} l_k a_k^{-\alpha}
\lesssim
\sum_{k\geq1} a_k^{\beta-\alpha}
<\infty,
$$
and hence $\calN$ is $\alpha$-sparse. Consequently, the conclusion of Theorem
\ref{thm:long-AP} fails for $\calD_\alpha$ in this case. This leads naturally
to the following question.

\begin{problem}
Let $0<\alpha\leq1$. Does there exist a set $\calN\subseteq\Z_+$ containing
arithmetic progressions
$$
P(a_k,l_k,q),
\qquad k\geq1,
$$
for some fixed $q\in\N$, with $l_k\simeq a_k^\alpha$, such that the conclusion
of Theorem \ref{thm:long-AP} fails for $\calD_\alpha$?
\end{problem}

Note that if $l_k\simeq a_k^\alpha$, then $\calN$ is not locally
$\alpha$-sparse. Indeed, for $m,q\in\N$ and sufficiently large $k$, the
interval $[a_k,a_k+l_k)$ contains at least
$l_k/(2q)$ integers congruent to $m\!\mod q$. Since every
$n\in[a_k,a_k+l_k)$ satisfies $n\simeq a_k$, we obtain
$$
\sum_{\substack{a_k \leq n < a_k+l_k\\ n\equiv m \!\!\mod q}} n^{-\alpha}
\gtrsim
\frac{l_k}{q}a_k^{-\alpha}
\gtrsim
\frac1q.
$$
Summing over $k$, it follows that
$$
\sum_{\substack{n\in\calN\\ n\equiv m \!\!\mod q}} n^{-\alpha}
=\infty.
$$
Thus $\calN$ is not locally $\alpha$-sparse. Consequently, one cannot use
Theorem \ref{thm:Furstenberg} to show that such sets fail to satisfy the
conclusion of Theorem \ref{thm:long-AP}.

\subsection{Shapiro-Shields and Carleson type conditions on restricted kernels}\label{subsec:shashi}

Our concept of uniqueness with restrictions on coefficients can be approached as a usual type of uniqueness within the following space. Let $\alpha \in \R$ and $\calN \subseteq \N$. Then $\calN$ induces a subspace of $\calD_\alpha$ that we denote
$$
\calD_{\alpha, \calN}=\wb{\vspan}\{z^k: k \in \calN \}.
$$
Since $\calD_{\alpha, \calN}$ is a closed subspace of $\calD_\alpha$, it is a reproducing kernel Hilbert space in itself, whose kernels $k^{\alpha}_{\calN, \lambda}$ are the corresponding projections of the kernels in $\calD_\alpha$ in \eqref{eqn47}, that is \[k^{\alpha}_{\calN, \lambda}(z) = \sum_{k \in \calN} \frac{\overline{\lambda}^k z^k}{(k+1)^\alpha}.\]
 This leads to a possible construction in the spirit of Shapiro-Shields \cite{ShaShi} and we suspect that the role of the equation \eqref{eqnA01} may be replaced by the natural
$$
\sum_{\lambda\in\Lambda} \frac{1}{\|k^{\alpha}_{\calN,\lambda}\|_\alpha^2} = \infty.
$$
There is a sense in which the Shapiro-Shields condition is sharp, as shown by Carleson \cite{Carleson}; see also \cite{NRS} and \cite[Section 4.2]{EFKMR}. This is in some way reflected in our two type of Theorems, but we preferred to pursue arithmetic conditions rather than determinant estimates that depend on kernels for which the theory is still rather poor. We do not know whether $k^{\alpha}_{\calN,\lambda}$ will generate issues such as unprescribed zeros of the Shapiro-Shields functions, or linear dependences between the kernels.
However, we encourage the community to join the effort in this regard. Perhaps an interplay between techniques for studying boundary values in more abstract RKHS, like the second paper in Dahlin's thesis \cite{Dahlin}, and a strong control on zeros of kernels, in the spirit of \cite{Perala} yields a complete answer here.


\section{Locally sparse sequences}

\subsection{Examples of locally sparse sequences}\label{subsec:sparse-examples}

Here we present several classes of sequences that have been extensively
studied in the number theory literature.

\begin{enumerate}[wide, labelwidth=!, labelindent=0pt, itemsep=10pt, topsep=10pt]
\item \textit{Primitive sets.}
A set $\calN\subseteq\Z_+$ is called \emph{primitive} if, whenever
$m,n\in\calN$ and $m\mid n$, we must have $m=n$. Classical examples include
the primes, and more generally the sets of $k$-almost primes -- that is,
the integers with exactly $k$ prime factors (counted with multiplicity).
Every infinite primitive set is locally $0$-sparse. Indeed, if $n\in\calN$,
then the congruence class $0 \!\mod n$ intersects $\calN$ only at $n$ itself.
On the
other hand, many natural primitive sets are not $\alpha$-sparse for any
$0\leq\alpha\leq1$; for example, this is the case for the primes since
$$
\sum_{p \text{ prime}} \frac{1}{p} = \infty.
$$

\item \textit{Hardy--Littlewood--P\'olya sequences.}
Fix distinct primes $p_1,\dots,p_r$. The corresponding
Hardy--Littlewood--P\'olya sequence is the set
$$
\calN=\{p_1^{k_1}p_2^{k_2}\cdots p_r^{k_r}:\; k_1,\dots,k_r\geq0\},
$$
written in increasing order. These sequences are $\alpha$-sparse for every
$\alpha>0$, since
$$
\sum_{n\in\calN} n^{-\alpha}
=
\prod_{j=1}^r \sum_{k\geq0} p_j^{-\alpha k}
<\infty.
$$
They are also locally $0$-sparse. Indeed, fix
$n=p_1^{a_1}\cdots p_r^{a_r}\in\calN$, and set $q=p_1^{a_1+1}\cdots p_r^{a_r+1}$.
Suppose that $m=p_1^{b_1}\cdots p_r^{b_r}\in\calN$ satisfies
$m\equiv n\!\mod q$. Then, for each $j$, we have that
$m\equiv n\!\mod {p_j^{a_j+1}}$.
Let $s_j=\min(a_j,b_j)$, and write
$$
m'=m/p_j^{s_j},
\quad
n'=n/p_j^{s_j}.
$$
Since $s_j\leq a_j$, the above congruence implies that
$m'\equiv n'\!\mod{p_j}$. If $a_j\neq b_j$, then exactly one of $m'$ and $n'$
is divisible by $p_j$, while the other is not, which is impossible. Hence
$a_j=b_j$ for every $j$, and therefore $m=n$. Thus the congruence class
$n\!\mod q$ intersects $\calN$ only at $n$ itself, showing that
$\calN$ is locally $0$-sparse.

\item \textit{Divisibility chains.}
A sequence $\calN=\{n_k\}_{k\geq1}\subseteq\Z_+$ is called a divisibility
chain if
$$
n_k\mid n_{k+1}
\qquad\text{for all } k\geq1.
$$
Classical examples include the geometric progressions $\{a^k\}_{k\geq0}$,
the factorials $\{k!\}_{k\geq1}$, and the least common multiples
$\{\operatorname{lcm}(1,\dots,k)\}_{k\geq1}$. Every increasing divisibility
chain is locally $0$-sparse. Indeed, if $n_j\in\calN$, then every later term
in the sequence is divisible by $n_{j+1}$, and hence the congruence class
$n_j\!\mod{n_{j+1}}$ intersects $\calN$ only in the finite set
$\{n_1,\dots,n_j\}$. Moreover, every increasing divisibility chain is
$\alpha$-sparse for every $\alpha>0$, since $n_{k+1}\geq 2n_k$ for all $k$.
On the other hand, divisibility chains need not be $0$-sparse; for example,
the sequences $\{a^k\}_{k\geq0}$ and
$\{\operatorname{lcm}(1,\dots,k)\}_{k\geq1}$ are not $0$-sparse, whereas
$\{k!\}_{k\geq1}$ is.

\item \textit{Monomial orbits and Piatetski--Shapiro sequences.}
Fix $t>1$, and consider the sequence
$$
\calN = \{\lfloor k^t\rfloor\}_{k\geq1}.
$$
When $t\in\Z$, this is the monomial orbit $\{k^t\}_{k\geq1}$, while for
non-integer $t$ this is known as a Piatetski--Shapiro sequence. In either case,
$\calN$ is locally $\alpha$-sparse if and only if it is $\alpha$-sparse.
For integer $t$, this follows from the observation that, for any
$k,q\in\N$, the set
$$
\{n\in\calN:\; n\equiv k^t \!\mod q\}
$$
contains the sequence $\{(k+jq)^t\}_{j\geq1}$. For non-integer $t$, the
same conclusion follows from the equidistribution of Piatetski--Shapiro
sequences in residue classes \cite{DDMS}. Indeed, if $a\in\{0,\dots,q-1\}$ and
$$
\{n\in\calN:\; n\equiv a \!\mod q\}
=
\{n_{k_j}\}_{j\geq1},
$$
then
$$
|\{j:\; k_j\leq N\}| \sim N/q,
\quad N\to\infty.
$$
Hence $k_j\sim qj$ as $j\to\infty$, and therefore
$n_{k_j}=\lfloor k_j^t\rfloor\sim (qj)^t$.
Consequently, $\calN$ is locally $\alpha$-sparse precisely when $\alpha>1/t$.
\end{enumerate}

\begin{remark}
Given any $0\leq\alpha_0\leq1$, the preceding examples may be combined
to produce sets $\calN\subseteq\Z_+$ which are not $\alpha$-sparse for
any $0\leq\alpha\leq1$, but which are locally $\alpha$-sparse precisely
for $\alpha>\alpha_0$. For example, let
$$
\calN
=
\{p\geq3:\; p \text{ prime}\}
\cup
\{2\lfloor k^t\rfloor:\; k\geq1\},
$$
where $t>1$. Since $\calN$ contains the odd primes, it is not
$\alpha$-sparse for any $0\leq\alpha\leq1$. Moreover, since it contains
$\{2\lfloor k^t\rfloor\}_{k\geq1}$, it is not locally $\alpha$-sparse
for any $0\leq\alpha\leq1/t$.

On the other hand, if $p$ is an odd prime, then
$$
\{n\in\calN:\; n\equiv p \!\!\mod 2p\}
$$
is contained in
$$
\{p\}\cup\{2\lfloor k^t\rfloor:\; k\geq1\},
$$
while if $n=2\lfloor k^t\rfloor$, then
$$
\{m\in\calN:\; m\equiv n \!\!\mod 2\}
=
\{2\lfloor j^t\rfloor:\; j\geq1\}.
$$
In either case, the resulting set is $\alpha$-sparse when $\alpha>1/t$.
Hence $\calN$ is locally $\alpha$-sparse precisely when $\alpha>1/t$.
\end{remark}

The case of monomial orbits, and more generally orbits of integer
polynomials, is particularly intriguing. For the orbit $\{k^d\}_{k\geq1}$,
it is natural to ask whether the threshold $\alpha>1/d$ is intrinsic to the
problem, or merely a limitation of our methods. This leads to the following problem.

\begin{problem}
Let $d\geq2$, and let $\calN=\{k^d\}_{k\geq1}$ (or more generally, let
$\calN$ be the orbit of an integer polynomial of degree $d$). For which
$0\leq\alpha\leq 1/d$, if any, does the conclusion of Theorem \ref{thm:Furstenberg} hold for $\calD_\alpha$?
\end{problem}


\subsection{Proof of Theorem \ref{thm:Furstenberg}}
We begin with two preliminary Lemmas that address the case of $0$-sparse
sequences.

\begin{lemma}\label{lem:lacunary}
For each $k_0\geq1$, and each interval $I\subseteq [0,1]$, of length
$|I|\geq 2/n_{k_0+1}$, there exists $\theta\in I$ such that
$$
\dist(n_k\theta,\Z) = \inf_{m\in\Z}|n_k\theta-m| \leq n_k/n_{k+1}
\quad\text{for all} \;\; k>k_0.
$$
\end{lemma}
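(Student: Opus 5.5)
The plan is a nested-intervals construction. Here $\calN=\{n_k\}$ is increasing, and we may assume $n_{k+1}\geq 2n_k$ for all large $k$; this holds in the $0$-sparse setting because $n_k/n_{k+1}\to0$. We build a decreasing sequence of closed intervals $I=J_{k_0}\supseteq J_{k_0+1}\supseteq J_{k_0+2}\supseteq\cdots$ with the following properties:
\begin{itemize}
\item $|J_k|= 2/n_{k+1}$, after shrinking $I$ to a subinterval of exactly this length when $k=k_0$;
\item every $\theta\in J_k$ (for $k>k_0$) satisfies $\dist(n_k\theta,\Z)\leq n_k/n_{k+1}$.
\end{itemize}
Any $\theta$ in the intersection, which is nonempty by compactness, then satisfies the required inequality for all $k>k_0$.

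The inductive step runs as follows. Suppose $J_{k-1}$ has been chosen with $|J_{k-1}|=2/n_k$. Since $n_kJ_{k-1}$ is an interval of length $2$, it contains an integer $m$ such that $[m-n_k/n_{k+1},\,m+n_k/n_{k+1}]\subseteq n_kJ_{k-1}$. This holds provided $n_k/n_{k+1}\leq 1/2$: an interval of length $2$ contains an integer at distance at least $1/2$ from both endpoints. Define
$$
J_k=\Bigl[\frac{m}{n_k}-\frac{1}{n_{k+1}},\,\frac{m}{n_k}+\frac{1}{n_{k+1}}\Bigr].
$$
Then $J_k\subseteq J_{k-1}$ and $|J_k|=2/n_{k+1}$. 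For $\theta\in J_k$ we have $|n_k\theta-m|\leq n_k/n_{k+1}$, which is exactly the required condition at stage $k$.

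The only delicate point, which I expect to be the main obstacle, is the requirement $n_k/n_{k+1}\leq1/2$ used to centre the integer. If some ratio exceeds $1/2$, then the target bound $n_k/n_{k+1}>1/2$ is automatic, since $\dist(\cdot,\Z)\leq1/2$ always. At such a stage we impose no condition at $k$. We simply take $J_k$ to be any subinterval of $J_{k-1}$ of length $2/n_{k+1}$, which is possible because $2/n_{k+1}\leq 2/n_k$. The induction therefore goes through in both cases, and the lemma follows.
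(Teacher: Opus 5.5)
Your proof is correct and is essentially the paper's nested-interval argument: your intervals $J_k=[m/n_k-1/n_{k+1},\,m/n_k+1/n_{k+1}]$ are exactly the paper's $I_k$, centred at $j_k/n_k$ with radius $\beta_k/n_k=1/n_{k+1}$. Your justification of $J_k\subseteq J_{k-1}$ is more careful than the paper's. You choose an integer at distance at least $1/2$ from both ends of $n_kJ_{k-1}$, and you treat stages with $n_k/n_{k+1}>1/2$ separately as trivial. The paper only chooses $j_k$ with $[j_k/n_k,(j_k+1)/n_k]\subseteq I_{k-1}$, which does not by itself guarantee the containment.
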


\begin{proof}
Set $\beta_k=n_k/n_{k+1}$.
Fix $k_0\geq1$ and an interval $I=I_{k_0}\subseteq[0,1]$ with
$|I|\geq 2\beta_{k_0}/n_{k_0}$. Since $1/n_{k_0+1}=\beta_{k_0}/n_{k_0}\leq|I_{k_0}|/2$, we can
find $j_{k_0+1}\in\Z$ such that
$$
[j_{k_0+1}/n_{k_0+1}, (j_{k_0+1}+1)/n_{k_0+1}]
\subseteq I_{k_0}.
$$
Then letting
$$
I_{k_0+1}=[(j_{k_0+1}-\beta_{k_0+1})/n_{k_0+1}, (j_{k_0+1}+\beta_{k_0+1})/n_{k_0+1}],
$$
we have that $I_{k_0+1}\subseteq I_{k_0}$.
Iterating this argument, we obtain a sequence $\{j_k\}_{k>k_0}$, so that
the closed intervals
$$
I_k = [(j_k-\beta_k)/n_k, (j_k+\beta_k)/n_k]
$$
are nested. The intersection of the $I_k$ consists of a single point and so
we take
$$
\theta\in\bigcap_{k\geq k_0}I_k.
$$
\end{proof}

\begin{lemma}\label{lem:0-sparse}
Assume $\calN=\{n_k\}_{k\geq1}$ is $0$-sparse. Then there exists a
sequence of finite sets $\Theta_k=\Theta_k(\calN)\subseteq\T$, $k\geq1$,
such that for every sequence $\{r_k\}_{k\geq1}\subseteq(0,1)$ with $r_k\to1$,
and every $f\in H^2$ with $\supp\wh{f}\subseteq\calN$, the following holds:
\begin{equation}\label{eq:sparse-norm-estimate}
\|f\|_{H^2}^2
\ls
\liminf_{k\to\infty}
\frac{1}{|\Theta_k|}\sum_{\zeta\in\Theta_k} |f(r_k \zeta)|^2.
\end{equation}
Moreover, if $\calN$ is contained in an arithmetic progression with gap size
$q$, then we can take each $\Theta_k(\calN)$ to be invariant under rotation
by $2\pi/q$.
\end{lemma}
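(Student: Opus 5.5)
The plan is to build each $\Theta_k$ from points $e^{2\pi i\theta}$ supplied by Lemma \ref{lem:lacunary}, with one point in each of many short arcs. Two things should then happen at these points.
\begin{enumerate}
\item The high-frequency part $\sum_{j>k}\wh f(n_j)(r\zeta)^{n_j}$ is essentially the same constant at every $\zeta\in\Theta_k$.
\item The points are spread evenly enough that a Marcinkiewicz--Zygmund type inequality controls the low-frequency polynomial $\sum_{j\le k}\wh f(n_j)(r\zeta)^{n_j}$.
\end{enumerate}

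\emph{Construction.} Write $c_j=\wh f(n_j)$ and $\beta_j=n_j/n_{j+1}$; by \eqref{eq:zero-sparse}, $\sum_j\beta_j^2<\infty$. Let $N_k\simeq n_{k+1}/2$. Split $[0,1]$ into $N_k$ consecutive intervals $I_{k,i}$, each of length $1/N_k\ge 2/n_{k+1}$. Lemma \ref{lem:lacunary} with $k_0=k$ then gives $\theta_{k,i}\in I_{k,i}$ with $\dist(n_j\theta_{k,i},\Z)\le\beta_j$ for all $j>k$. Set $\Theta_k=\{e^{2\pi i\theta_{k,i}}\}_i$.

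\emph{Tail estimate.} For $\zeta\in\Theta_k$ and any $r<1$, we have $|\zeta^{n_j}-1|\le 2\pi\beta_j$ when $j>k$. Cauchy--Schwarz then gives
$$
\Bigl|\sum_{j>k}c_jr^{n_j}\zeta^{n_j}-T_k(r)\Bigr|
\le 2\pi\|f\|_{H^2}\Bigl(\sum_{j>k}\beta_j^2\Bigr)^{1/2}=:\varepsilon_k\|f\|_{H^2},
$$
where $T_k(r)=\sum_{j>k}c_jr^{n_j}$, and $\varepsilon_k\to0$. This holds uniformly in $r$, so the choice of $r_k\to1$ does not matter.

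\emph{Head estimate.} Put $h_k(z)=\sum_{j\le k}c_jr_k^{n_j}z^{n_j}+T_k(r_k)$. This is a polynomial of degree $n_k$, and $n_k\le\beta_k n_{k+1}\ll N_k$ because $\beta_k\to0$. The nodes $\theta_{k,i}$ form a set with one point per interval of length $1/N_k$, so they are uniformly separated in the sense needed for the Marcinkiewicz--Zygmund inequality once the degree is a small fraction of $N_k$ (for instance, use only every other node to get separation). This gives
$$
\frac1{|\Theta_k|}\sum_{\zeta\in\Theta_k}|h_k(\zeta)|^2\gtrsim\|h_k\|_{H^2}^2\ge\sum_{j\le k,\,n_j>0}|c_j|^2r_k^{2n_j}.
$$
Combining this with the tail estimate via the triangle inequality in $\ell^2(\Theta_k)$, and letting $k\to\infty$ with $r_k\to1$ (monotone convergence), yields $\sum_{n_j>0}|c_j|^2\lesssim\liminf$ of the averages.

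\emph{Main obstacle.} I expect the main difficulty to be the constant coefficient when $0\in\calN$, because $T_k(r_k)$ can cancel $c_0$. I would handle it by pulling $0$ out of $\calN$. The averages control $\|f-\wh f(0)\|^2$ up to $o(\|f\|^2)$, and the contribution of a single coefficient to $\|f\|^2$ is comparable to a fixed fraction of it after a harmless renormalisation, so it can be absorbed. Alternatively, I would apply the argument to $zf$, using the shifted sequence, which is still $0$-sparse. The MZ step with near-uniform, rather than exactly uniform, nodes also needs care, but it is standard.

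\emph{Rotation invariance.} If $\calN\subseteq m+q\Z$, I would choose the nodes only in $[0,1/q)$ and take $\Theta_k$ to be the union of their rotations by $2\pi\ell/q$. Every $n_j\equiv m \pmod q$, so rotating by $2\pi\ell/q$ multiplies each $\zeta^{n_j}$ by the same unimodular factor $e^{2\pi i\ell m/q}$. Hence the tail is still nearly constant, in modulus, on each rotated copy. The MZ step goes through unchanged, since $|f|$ is then invariant under that rotation.
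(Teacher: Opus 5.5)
Your treatment of the main inequality for $0\notin\calN$ is sound, and it takes a different route from the paper. You take about $n_{k+1}/2$ nodes from Lemma \ref{lem:lacunary}, which freezes every frequency $n_j$ with $j>k$, while the head has degree $n_k=\beta_kn_{k+1}=o(|\Theta_k|)$. An elementary Bernstein-type Marcinkiewicz--Zygmund estimate, with constants tending to $1$, then does the sampling. The paper instead samples critically at $n_k$ perturbed $n_k$-th roots of unity and needs Kadets' $1/4$ theorem for polynomials.

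You are right that $0\in\calN$ is a real obstruction; the paper's proof silently excludes it when it asserts that $f_k$ has mean zero. Your first remedy cannot work, though. Take $n_1=0$, $f=1-\delta\sum_{j\geq2}j^{-1}z^{n_j}$, and choose $r_k\uparrow1$ so that $\delta\sum_{j>k}j^{-1}r_k^{n_j}=1$. Then $\wh f(0)+T_k(r_k)=0$ for every $k$. By the two-sided sampling estimate the averages tend to $\|f-1\|_{H^2}^2<\delta^2\pi^2/6$, while $\|f\|_{H^2}^2>1$, so nothing can be absorbed. Your alternative is the correct repair: set $\Theta_k(\calN):=\Theta_k(\calN+1)$ and use $|r_k\zeta f(r_k\zeta)|=r_kf|(r_k\zeta)|$. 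On these nodes the frozen tail of $zf$ sits at frequency $0\notin\calN+1$.

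The genuine gap is in the rotation-invariance part when $m\not\equiv0\pmod q$; for $m\equiv0$ your argument is fine. On the copy rotated by $e^{2\pi i\ell/q}$ the tail is close to $e^{2\pi i\ell m/q}T_k(r_k)$. So on $\Theta_k$ the function $f(r_k\,\cdot)$ is approximated by $P_k+T_k(r_k)\chi$, where $P_k(z)=\sum_{j\leq k}c_jr_k^{n_j}z^{n_j}$ and $\chi$ is the step function equal to $e^{2\pi i\ell m/q}$ on the $\ell$-th arc. This is not a polynomial, so the MZ step does not go through ``unchanged''. The invariance of $|f|$ only reduces matters to the nodes in the single arc $[0,1/q)$, and there polynomials of degree $n_k$ satisfy no MZ inequality uniform in the degree.

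The step can be repaired directly. Since $\chi$ is constant on each sampling interval, the Bernstein error involves only $P_k$. Moreover, $\chi$ has nonzero Fourier coefficients at negative $n\equiv m$, which the analytic $P_k$ cannot cancel, so $\|P_k+T\chi\|_{L^2}\gtrsim\|P_k\|_{H^2}$.

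The simplest fix, however, is the paper's. Write $f(z)=z^mg(z^q)$ with $\supp\wh g\subseteq\wt\calN=\{(n-m)/q:\ n\in\calN\}$, and put $\Theta_k(\calN)=\{\zeta\in\T:\ \zeta^q\in\Theta_k(\wt\calN)\}$. Then the averages of $|f(r_k\zeta)|^2$ and of $r_k^{2m}|g(r_k^q\xi)|^2$ coincide. Since $\wt\calN$ contains $0$ whenever $m\in\calN$, this reduction relies on your $zf$ repair of the base case.
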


\begin{proof}
For each $k\geq 1$ and $j=1,\dots, n_k$, we apply Lemma \ref{lem:lacunary}
with
$$
I=[j/n_k-1/n_{k+1},\; j/n_k+1/n_{k+1}]
$$
to obtain points $\theta_{k,j}$ such that $|\theta_{k,j}-j/n_k|\leq 1/n_{k+1}$
and $\dist(n_l\theta_{k,j},\Z) \leq n_l/n_{l+1}$ for $l>k$. Write
$\zeta_{k,j} = e^{2\pi i \theta_{k,j}}$ and set
$$
\Theta_k(\calN)
=
\{\zeta_{k,j}:\; j=1,\dots,n_k\}.
$$

Fix a sequence $\{r_k\}_{k\geq1}$ increasing to $1$, and take $f\in H^2$
whose Taylor coefficients are supported on $\calN$. For $k\geq1$, set
$$
f_k(z)=\sum_{j<k} \wh{f}(n_j) z^{n_j},
\quad g_k(z)=f(z)-f_k(z).
$$
Note that $f_k$ is a polynomial of degree at most $n_k-1$ with mean zero on
$\T$, and so
$$
\sum_{j < n_k} f_k(r_k e^{i2\pi j/n_k}) = 0.
$$
In addition, for $l\leq k$ and $j\leq n_k$,
$$
|\zeta_{k,j}^{n_l}-e^{i2\pi jn_l/n_k}|
\ls n_l|\theta_{k,j} - j/n_k|
\leq n_k/n_{k+1}.
$$
Using these, and applying Cauchy-Schwarz, we see that
\begin{align}\label{eq:truncated}
\nonumber
\Abs{\frac{1}{n_k}\sum_{j\leq n_k} f_k(r_k \zeta_{k,j})}
&=
\Abs{\frac{1}{n_k}\sum_{j\leq n_k}
\sum_{l < k} \wh{f}(n_l) r_k^{n_l}(\zeta_{k,j}^{n_l}-e^{i2\pi jn_l/n_k})} \\ \nonumber
&\leq
\frac{n_k}{n_{k+1}}\sum_{l < k} |\wh{f}(n_l)| r_k^{n_l} \\ 
&\ls
\|f\|_{H^2}
\frac{n_k}{n_{k+1}}\sqrt{k}.
\end{align}
For $l>k$, we have $|\zeta_{k,j}^{n_l}-1|\leq n_l/n_{l+1}$, and so for
each $j=1,\dots, n_k$ we have
\begin{align}\label{eq:tail}
\nonumber
\Abs{\left(g_k(r_k \zeta_{k,j}) - g_k(r_k)\right)}
&=
\Abs{\sum_{l>k} \wh{f}(n_l) r_k^{n_l}(\zeta_{k,j}^{n_l}-1)} \\
&\leq
\sum_{l>k} |\wh{f}(n_l)| r_k^{n_l}\frac{n_l}{n_{l+1}}
\ls
\|f\|_{H^2}\left(\sum_{l>k} \frac{n_l^2}{n_{l+1}^2}\right)^{1/2}.
\end{align}
The fact that $\calN$ is $0$-sparse implies that, by taking $k$ along
a suitable subsequence, each of the expressions in \eqref{eq:truncated}
and \eqref{eq:tail} converge to zero. It follows that
$$
|g_k(r_k)| 
= 
\Abs{\frac{1}{n_k}\sum_{j\leq n_k} g_k(r_k \zeta_{k,j})} + o(1)
=
\Abs{\frac{1}{n_k}\sum_{j\leq n_k} f(r_k \zeta_{k,j})} + o(1).
$$ 
So for each $j$,
$$
|g_k(r_k \zeta_{k,j})|^2
\ls
\Abs{\frac{1}{n_k}\sum_{j\leq n_k} f(r_k \zeta_{k,j})}^2 + o(1)
\ls
\frac{1}{n_k}\sum_{j\leq n_k} |f(r_k \zeta_{k,j})|^2 + o(1),
$$
and
$$
|f_k(r_k \zeta_{k,j})|^2
=
|f(r_k \zeta_{k,j})-g_k(r_k \zeta_{k,j})|^2
\ls
|f(r_k \zeta_{k,j})|^2
+
\frac{1}{n_k}\sum_{j\leq n_k} |f(r_k \zeta_{k,j})|^2 + o(1).
$$
Then by the Kadets $1/4$ theorem for polynomials \cite{Marzo-Seip}, we have
$$
\sum_{j<k} |\wh{f}(n_j)|^2 r_k^{2n_j}
\ls
\frac{1}{n_k}\sum_{j\leq n_k} |f_k(r_k \zeta_{k,j})|^2
\ls
\frac{1}{n_k}\sum_{j\leq n_k} |f(r_k \zeta_{k,j})|^2 + o(1),
$$
from which we conclude the estimate \eqref{eq:sparse-norm-estimate}.

Finally, observe that if each $n\in\calN$ satisfies $n\equiv m \!\! \mod q$
for some $q\in\N$ and $0\leq m<q$, then the set
$\wt{\calN}=\{(n-m)/q:\;n\in\calN\}$ is also $0$-sparse, so we can use the
construction above to get sets $\Theta_k(\wt{\calN})$. Moreover, if
$\supp\wh{f}\subseteq\calN$, there is some $g\in H^2$ with
$\supp\wh{g}\subseteq\wt{\calN}$ and such that $f(z)=z^mg(z^q)$. Then if
we redefine $\Theta_k(\calN)$ by
$$
\Theta_k(\calN) =\{\zeta\in\T:\; \zeta^q\in\Theta_k(\wt{\calN})\},
$$
we have that $\Theta_k(\calN)$ is invariant under rotation by $2\pi/q$ and
\begin{align*}
\|f\|_{H^2}^2
=
\|g\|_{H^2}^2
&\ls
\liminf_{k\to\infty}
\frac{1}{|\Theta_k(\wt{\calN})|}\sum_{\zeta\in\Theta_k(\wt{\calN})}
|g(r_k \zeta)|^2 \\
&=
\liminf_{k\to\infty}
\frac{1}{|\Theta_k(\calN)|}\sum_{\zeta\in\Theta_k(\calN)}
|f(r_k \zeta)|^2.
\end{align*}
\end{proof}

We will also need two simple lemmas to allow us to pass from
sparse to locally sparse sequences.


\begin{lemma}\label{lem:decomposition}
Let $0\leq\alpha\leq1$, and let $\calN\subseteq\Z_+$ be locally
$\alpha$-sparse. Then $\calN$ admits a decomposition
\begin{equation}\label{eq:sparse-slices2}
\calN=\bigcup_{k\geq1}\calN_k,
\end{equation}
where the sets $\calN_k$ are pairwise disjoint and $\alpha$-sparse.
Moreover, there exist integers $Q_k\in\N$ and $0\leq l_k<Q_k$,
$k\geq1$, such that $Q_k\mid Q_{k+1}$ and
\begin{equation}\label{eq:sparse-slices}
\calN_k
=
\{n\in\calN:\; n\equiv l_k \!\!\mod Q_k\}
\end{equation}
whenever $\calN_k$ is non-empty.
\end{lemma}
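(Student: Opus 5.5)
The plan is to construct the sets $\calN_k$ recursively, handling at each step the smallest element of $\calN$ not yet covered. Each new residue class is chosen modulo a multiple of all the previous moduli. Because of this divisibility, the classes are automatically either nested or disjoint. If $\calN$ is empty or finite, the construction simply stops after finitely many steps. For the remaining indices we take $\calN_k=\emptyset$ and $Q_k=Q_{k-1}$, and the divisibility requirement $Q_k\mid Q_{k+1}$ still holds.

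\emph{Setup.} By local $\alpha$-sparseness, for each $n\in\calN$ we fix $q(n)\in\N$ such that $\{m\in\calN:\; m\equiv n \!\!\mod q(n)\}$ is $\alpha$-sparse. Set $Q_0=1$.

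\emph{Recursive step.} Suppose $Q_1,\dots,Q_{k-1}$ and $l_1,\dots,l_{k-1}$ have been chosen, and let $n$ be the smallest element of $\calN\setminus(\calN_1\cup\dots\cup\calN_{k-1})$. We put
$$
Q_k=\lcm(Q_{k-1},q(n)),\qquad l_k\equiv n \!\!\mod Q_k,
$$
and define $\calN_k$ by \eqref{eq:sparse-slices}.

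\emph{Disjointness.} Fix $j<k$. Since $Q_j\mid Q_k$, the class $l_k \!\!\mod Q_k$ is either contained in the class $l_j \!\!\mod Q_j$ or disjoint from it. It contains $n$, and $n\notin\calN_j$, so $n\not\equiv l_j \!\!\mod Q_j$. Hence the two classes are disjoint, and therefore $\calN_k$ is disjoint from all earlier $\calN_j$.

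\emph{Covering.} The chosen elements $n$ strictly increase from step to step, because each is the least uncovered element and it becomes covered at that step. Consequently every element of $\calN$ is eventually covered, and \eqref{eq:sparse-slices2} holds.

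\emph{Sparseness.} Since $q(n)\mid Q_k$, the set $\calN_k$ is a subset of the $\alpha$-sparse set $\{m\in\calN:\; m\equiv n \!\!\mod q(n)\}$. It therefore remains to check that subsets of $\alpha$-sparse sets are $\alpha$-sparse.
\begin{enumerate}[label=(\roman*)]
\item For $\alpha>0$ this is immediate from \eqref{eq:alpha-sparse}, since the series for a subset is a subseries of a convergent series of positive terms.
\item For $\alpha=0$, finite subsets are $0$-sparse by definition. For an infinite subset $\{n_{k_i}\}$ of $\{n_k\}$ we have
$$
\frac{n_{k_i}}{n_{k_{i+1}}}\leq\frac{n_{k_i}}{n_{k_i+1}},
$$
since $n_{k_{i+1}}\geq n_{k_i+1}$. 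So the series in \eqref{eq:zero-sparse} for the subset is dominated termwise by a subseries of the original convergent series.
\end{enumerate}

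The only step requiring real care is the disjointness of the classes. Choosing $Q_k$ as a common multiple of all previous moduli is exactly what makes the nested-or-disjoint dichotomy available, and this reduces disjointness to the fact that $n$ was not yet covered.
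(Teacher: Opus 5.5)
Your proof is correct and is essentially the paper's argument: the same recursion on the least uncovered element with $Q_k=\lcm(Q_{k-1},q(n))$, disjointness via $Q_j\mid Q_k$, and sparseness by inclusion in the $\alpha$-sparse class of $n$. You also check that subsets of $0$-sparse sets are $0$-sparse, which the paper uses without comment.

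Your termination rule is in fact more careful than the paper's. The paper asserts that a non-$\alpha$-sparse $\calN$ is never exhausted by finitely many classes. That holds for $\alpha>0$ but fails for $\alpha=0$: take $\calN=\{k!\}_{k\geq2}\cup\{k!+1\}_{k\geq2}$ with $q(n)=2$ for all $n$. So phrase your stopping rule as ``whenever $\calN\setminus(\calN_1\cup\dots\cup\calN_{k-1})$ is empty'', not only for finite $\calN$.
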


\begin{proof}
If $\calN$ is $\alpha$-sparse, we can simply take
$$
\calN_1=\calN,\qquad \calN_k=\emptyset, \quad k>1,
$$
and set $Q_k=l_k=1$ for all $k\geq1$. So we will assume that $\calN$ is not
$\alpha$-sparse. In this case, we construct the sequences $\{l_k\}_{k\geq1}$
and $\{Q_k\}_{k\geq1}$ inductively, and define $\calN_k$ by \eqref{eq:sparse-slices}.

To begin, choose $n_1$ to be the smallest element of $\calN$. Since $\calN$
is locally $\alpha$-sparse, there exists $Q_1\in\N$ such that 
$$
\{n\in\calN:\; n\equiv n_1 \!\!\mod Q_1\}
$$
is $\alpha$-sparse. Let $l_1$ to be the residue class of $n_1$ modulo $Q_1$.

Suppose now that $l_1,\dots,l_{k-1}$ and $Q_1,\dots,Q_{k-1}$ have been chosen. Since $\calN$ is not $\alpha$-sparse, there exists $n\in\calN$ such that $n\not\equiv l_j \!\!\mod Q_j$, for all $j<k$. Let $n_k$ be the smallest such element. Choose $q_k\in\N$ such that
$$
\{n\in\calN:\; n\equiv n_k \!\!\mod q_k\}
$$
is $\alpha$-sparse. Then set $Q_k=\lcm(Q_{k-1},q_k)$ and $l_k$ to be the
residue class of $n_k$ modulo $Q_k$.

Note that, with $\calN_k$ given by \eqref{eq:sparse-slices}, we have
$$
\calN_k\subseteq\{n\in\calN:\; n\equiv n_k \!\!\mod q_k\},
$$
and so each $\calN_k$ is $\alpha$-sparse. Moreover, it's clear that
\eqref{eq:sparse-slices2} holds. Finally, to see that the sets $\calN_k$,
are pairwise disjoint, note that if $j<k$, then $Q_j\mid Q_k$. So if
$n\equiv l_k \!\!\mod Q_k$, we must have that 
$$
n\equiv l_k \not\equiv l_j \!\!\mod Q_j.
$$
This completes the proof.
\end{proof}

\begin{lemma}\label{lem:modular}
Fix $Q\in\N$ and $\zeta_0\in\T$. Let $\zeta_j=e^{i2\pi j/Q}\zeta_0$, $j=0,\dots,Q-1$. For $f\in H^2$ and $l=0,\dots,Q-1$ set
$$
F_l(z) = \sum_{k\equiv l \mathrm{\;mod\;} Q} \wh{f}(k) z^k.
$$
Then for any $r<1$ and $0\leq l, m <Q$,
$$
F_l(r\zeta_m) = \frac{1}{Q} \sum_{j<Q} (\zeta_m\wb{\zeta_j})^l f(r\zeta_j).
$$
\end{lemma}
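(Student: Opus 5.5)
This is the standard discrete Fourier filtering identity, so I will prove it by expanding $f$ in its Taylor series and using orthogonality of $Q$-th roots of unity. Fix $r<1$. The series $f(r\zeta_j)=\sum_{k\geq0}\wh{f}(k)r^k\zeta_j^k$ converges absolutely, since $\sum_k|\wh{f}(k)|r^k\leq\|f\|_{H^2}(1-r^2)^{-1/2}$. This lets me interchange the finite sum over $j$ with the sum over $k$ freely.

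Next I substitute into the right-hand side. Writing $\omega=e^{i2\pi/Q}$, we have $\zeta_j=\omega^j\zeta_0$, so $\zeta_m\wb{\zeta_j}=\omega^{m-j}$ because $|\zeta_0|=1$. The right-hand side therefore becomes
$$
\frac{1}{Q}\sum_{k\geq0}\wh{f}(k)r^k\zeta_0^k\,\omega^{ml}\sum_{j<Q}\omega^{-jl}\omega^{jk}.
$$
The inner sum $\sum_{j<Q}\omega^{j(k-l)}$ equals $Q$ if $k\equiv l \mod Q$ and $0$ otherwise. This is the geometric sum of a $Q$-th root of unity different from $1$.

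Only the indices $k\equiv l\mod Q$ survive, and for these $\omega^{ml}=\omega^{mk}$. The expression then reduces to
$$
\sum_{k\equiv l}\wh{f}(k)r^k(\omega^m\zeta_0)^k=F_l(r\zeta_m).
$$
This is exactly the claimed identity.

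The only point needing care is justifying the interchange of summation, which the absolute convergence above handles. Otherwise the argument is routine bookkeeping of the phase $\zeta_m^l$ versus $\zeta_m^k$, which agree on the surviving residue class.
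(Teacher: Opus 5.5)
Your proof is correct and uses the same idea as the paper, orthogonality of the $Q$-th roots of unity applied to the Taylor series. The paper organizes it differently: it first proves the case $l=0$ by plain averaging, then handles general $l$ by applying that case to $S^{*l}f$, which requires dividing by $r^l$ and checking that the first $l$ Taylor terms average to zero. Your single computation with the twisted sum $\sum_{j<Q}\omega^{j(k-l)}$ is a slightly cleaner version of the same argument.
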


\begin{proof}
For each $r<1$ and $0\leq l <Q$ we have
$$
\frac{1}{Q} \sum_{j<Q} F_l(r\zeta_j)
=
\sum_{k\equiv l \mathrm{\;mod\;} Q} \wh{f}(k)r^k
\frac{1}{Q} \sum_{j<Q} \zeta_j^k
=
\begin{cases}
F_l(r\zeta_0), \quad l=0, \\
0, \quad \text{otherwise}.
\end{cases}
$$
It follows that
$$
\frac{1}{Q} \sum_{j<Q} f(r\zeta_j)
=
\sum_{l<Q}\frac{1}{Q} \sum_{j<Q} F_l(r\zeta_j)
=F_0(r\zeta_0).
$$
Since $F_0$ identifies the point $r\zeta_0, \dots, r\zeta_{Q-1}$, this proves
the claim with $l=0$.

For $l\neq0$, we observe that
$$
F_l(z) = z^l\sum_{k\equiv 0 \mathrm{\;mod\;} Q} \wh{S^{*l}f}(k) z^k,
$$
and
$$
\frac{1}{Q} \sum_{j<Q} S^{*l}f(r\zeta_j)
=
\frac{1}{Q} \sum_{j<Q} r^{-l}\wb{\zeta_j}^{l}
\left(f(r\zeta_j) - \sum_{k<l}\wh{f}(k)r^k\zeta_j^k\right) 
=
\frac{1}{Q} \sum_{j<Q} r^{-l}\wb{\zeta_j}^{l}f(r\zeta_j).
$$
The conclusion now follows from applying the $l=0$ case to the function
$S^{*l}f$.
\end{proof}


In the proof of Theorem \ref{thm:Furstenberg}, we use the following identity for the
reproducing kernels in $H^2$:
\begin{equation}\label{eq:metric}
\|k^0_{\lambda} - k^0_{\mu}\|_{H^2}^2
=
\rho(\lambda,\mu)^2\frac{1-|\lambda|^2|\mu|^2}{(1-|\lambda|^2)(1-|\mu|^2)}.
\end{equation}
This can be verified by a direct computation.

\begin{proof}[Proof of Theorem \ref{thm:Furstenberg}]
Fix $\calN\subseteq\Z_+$ which is locally $\alpha$-sparse. We will construct
a sequence of finite sets $\Gamma_k\subseteq\T$, $k\geq1$, with the property
that $(\Lambda, \calN)$ is a uniqueness pair for $\calD_\alpha$, whenever
$$
\Lambda=\bigcup_{k\geq1} r_k\Gamma_k,
\quad
\{r_k\}_{k\geq1}\subseteq(0,1).
$$
Then writing $\Lambda=\{\lambda_k\}$ with $|\lambda_1|\geq|\lambda_2|\geq\dots$,
it is clear that given any sequence $\{t_k\}_{k\geq1}\subseteq(0,1)$,
we can take $r_k\to1$ sufficiently fast to ensure that $|\lambda_k|\geq t_k$
for every $k$. To this end, in the remainder of the proof, we fix an arbitrary
sequence $\{r_k\}_{k\geq1}\subseteq(0,1)$ tending to $1$, and
$f\in\calD_\alpha$ whose Taylor coefficients are supported on $\calN$.

For $k\geq1$, let $Q_k$, $l_k$ and $\calN_k$ be given by Lemma
\ref{lem:decomposition}, so that the sets $\calN_k$ are pairwise disjoint and $\alpha$-sparse, and \eqref{eq:sparse-slices2} and
\eqref{eq:sparse-slices} hold. We then decompose $f$ as
$$
f(z) =\sum_{k\geq1} f_k(z),
\qquad
f_k(z)
=
\sum_{n\in\calN_k} \wh{f}(n) z^n.
$$

First suppose that $\alpha>0$. If $\calN$ is $\alpha$-sparse, we can take
$\Gamma_k$ to be any collection of finite sets satisfying
\begin{equation}\label{eq:mesh}
\sup_{\zeta\in\T} \dist(\zeta, \Gamma_k) \to 0, \quad k\to\infty.
\end{equation}
Otherwise, we can take $\Gamma_k$ to be any finite subset of $\T$ which is
invariant under rotation by $2\pi/Q_k$. Note that in the case when $\calN$
is not $\alpha$-sparse, we necessarily have $Q_k\to\infty$, and so
\eqref{eq:mesh} automatically holds. Since each $\calN_j$ is $\alpha$-sparse,
each function $f_j$ extends continuously to $\wb{\D}$. Recall that for
$k\geq j$, $Q_j$ divides $Q_k$, and so $\Gamma_k$ is invariant under
rotation by $2\pi/Q_j$. It then follows from Lemma \ref{lem:modular} that
\begin{align}\label{eq:alpha-positive}
\|f_j\|_\infty 
&= \lim_{k\to\infty} \max_{\zeta\in\Gamma_k}|f_j(r_k\zeta)| \\ \nonumber
& \qquad\leq \lim_{k\to\infty} \max_{\zeta\in\Gamma_k}
\frac{1}{Q_j}\sum_{l<Q_j}|f(r_ke^{i2\pi l/Q_j}\zeta)|
\leq \lim_{k\to\infty} \max_{\zeta\in\Gamma_k}|f(r_k\zeta)|.
\end{align}

Next, suppose that $\alpha=0$. In this case we take
$$
\Gamma_k=\bigcup_{j\leq k} \Theta_k(\calN_j),
$$
where $\Theta_k(\calN_j)$ is given by Lemma \ref{lem:0-sparse} and chosen
to be invariant under rotation by $2\pi/Q_j$. We use the convention that
$\Theta_k(\emptyset)=\emptyset$. Then it is an immediate consequence
of Lemmas \ref{lem:0-sparse} and \ref{lem:modular} that for each $j$ with
$\calN_j$ non-empty, we have
\begin{equation}\label{eq:alpha-zero}
\|f_j\|_{H^2}^2
\ls
\liminf_{k\to\infty}
\frac{1}{|\Theta_k(\calN_j)|}\sum_{\zeta\in\Theta_k(\calN_j)} 
|f_j(r_k \zeta)|^2
\ls
\liminf_{k\to\infty}
\frac{1}{|\Theta_k(\calN_j)|}\sum_{\zeta\in\Theta_k(\calN_j)} 
|f(r_k \zeta)|^2.
\end{equation}
We conclude that in either case, if $f$ vanishes on $r_k\Gamma_k$ for all
$k$, then $f=0$ -- that is $(\Lambda, \calN)$ is a uniqueness pair, with
$\Lambda=\cup_k r_k\Gamma_k$.

It only remains to show that if $\wt{\Lambda}$ is a suitably small
perturbation of $\Lambda$, then $(\wt{\Lambda}, \calN)$ is also a uniqueness
pair. If $\wt{\Lambda}$ has an accumulation point in $\D$, then the conclusion
is immediate, so we may assume that it has none. Write
$\Lambda=\{\lambda_k\}_{k\geq1}$ and suppose that $\wt\Lambda$ satisfies
\eqref{eq:perturbation}. For $\lambda_k\in\Lambda$, let $\mu_k$ be a point
in $\wt{\Lambda}$ satisfying
$$
\rho(\mu_k, \lambda_k)=\inf_{\mu\in\wt{\Lambda}}\rho(\mu,\lambda_k).
$$
Then if $f$ vanishes on $\wt{\Lambda}$, we have
\begin{align*}
|f(\lambda_k)| &= |f(\lambda_k) - f(\mu_k)| \\
&\leq  \|k_{\lambda_k} - k_{\mu_k}\| \|f\|_{H^2} \\
&= \rho(\mu_k, \lambda_k)
\sqrt{\frac{1-|\lambda_k|^2|\mu_k|^2}{(1-|\lambda_k|^2)(1-|\mu_k|^2)}}\|f\|_{H^2} \\
&\ls \frac{\rho(\lambda_k, \mu_k)}{\sqrt{1-|\lambda_k|}}\|f\|_{H^2}.
\end{align*}
Hence $f(\lambda_k)\to0$, and so \eqref{eq:alpha-positive} or
\eqref{eq:alpha-zero} (depending on whether $\alpha$ is positive or zero,
respectively) implies that $f=0$, which completes the proof.
\end{proof}


\end{document}